\documentclass[11pt,a4paper]{article}
\usepackage[T1]{fontenc}
\usepackage[utf8]{inputenc}
\usepackage{lmodern}
\usepackage[left=2.5cm,right=2.5cm,top=2.0cm,bottom=2.5cm]{geometry}
\usepackage{microtype}
\usepackage{amsmath,amssymb,amsfonts,amsthm,mathtools,mathrsfs}
\usepackage{graphicx}
\usepackage{subcaption}
\usepackage{booktabs}
\usepackage{multirow}
\usepackage{array,makecell}
\usepackage{siunitx}
\usepackage{float}
\usepackage{adjustbox}
\usepackage{tikz}
\usepackage{tikz-cd}
\usepackage{pgfplots}
\usepackage{extarrows}
\usepackage{url}
\usepackage[colorlinks=true,linkcolor=blue,citecolor=blue,urlcolor=blue]{hyperref}
\usepackage[nameinlink,noabbrev]{cleveref}
\usepackage[section]{placeins}
\usetikzlibrary{matrix,positioning,quotes,shapes,arrows,arrows.meta,decorations.pathreplacing}
\pgfplotsset{compat=1.17}
\newcommand{\egyk}{\frac{1}{2}}
\newcommand{\er}{\mathbb{R}}
\newcommand{\xx}{\mathbf{x}}
\newcommand{\ff}{\mathbf{f}}
\newcommand{\uu}{\mathbf{u}}
\newcommand{\bb}{\mathbf{b}}
\newcommand{\relu}{\mathrm{ReLU}\:}
\newcommand{\NNN}{\mathrm{NN}}
\newcommand{\TV}{\mathrm{TV}}
\newcommand{\lin}{\mathrm{lin}\:}

\newtheorem{theorem}{Theorem}
\newtheorem{prop}[theorem]{Proposition}

\title{Neural Network-Assisted Refinement of Traditional Schemes for\\One-Dimensional Scalar Conservation Laws}
\author{
Imre Fekete\textsuperscript{1,2}
\and Ferenc Izsák\textsuperscript{3}
\and Vendel P. Kupás\textsuperscript{1}}
\date{}

\begin{document}
\maketitle
\begin{center}
\small
\textsuperscript{1} Department of Applied Analysis and Computational Mathematics, ELTE Eötvös Loránd University, Budapest, Hungary\\
\textsuperscript{2} Department of Network and Data Science, Central European University, Vienna, Austria\\
\textsuperscript{3} Department of Applied Analysis and Computational Mathematics and NumNet HUN-REN--ELTE Research Group, ELTE Eötvös Loránd University, Budapest, Hungary
\end{center}

\begin{abstract}
A clear link is established between conventional numerical methods and neural network approximations for solving one-dimensional scalar conservation laws. The focus is on the construction of an appropriate flux term in the case of convex flux functions for improving the classical schemes. The first neural network developed here is able to rediscover Godunov's method, while the second one emulates the behavior of a second-order slope-limiter function. In this way, by merging them, second-order reconstruction-based schemes can be developed. The networks presented here employ a minimal number of parameters, significantly reducing the complexity compared to previous approaches. These networks can also be linked consecutively to get a deep one corresponding to multiple time steps. Training them with an appropriate loss leads to stable schemes, improving even the classical methods without increasing their complexity.
\end{abstract}
\noindent\textbf{Keywords:} Conservation laws; neural networks; flux limiters

\medskip
\noindent\textbf{MSC Classification:} 65M08, 65M22, 76M12

\section{Introduction}
\label{intro}

Neural networks are now widely used across all areas of scientific computing. Significant efforts have also been made to incorporate them into, or even replace, conventional numerical partial differential equation (PDE) solvers. In general terms, these approaches fall into two categories.

In the first category, a traditional numerical method is entirely replaced by a suitably designed neural network, which directly computes the full solution. Among such methods, the most widely adopted are physics-informed neural networks (PINNs), which have been applied to a variety of PDEs \cite{raissi19}. These networks typically rely on pointwise function evaluations and often involve dense architectures, resulting in an excessively large number of parameters.
Whenever a number of related approaches were proposed for the numerical solution of various PDEs, we do not list them as they are far from our focus. A related study discussing
and comparing several PINN-type approaches for Burgers equation is available in \cite{savovic23}.

 A more promising direction is offered by neural operators, particularly Fourier neural operators (FNOs), which have enabled competitive PDE simulations \cite{kovachki23}. At the same time, this approach makes use of global basis functions so that we cannot expect networks with local connections.

The second category of neural network-based approaches aims to enhance or partially replace components of existing numerical methods. In the context of computational fluid dynamics, we highlight a few representative examples below.

\begin{itemize}
    \item [-]
    Neural networks have been employed in \cite{ruggeri22} and further explored in \cite{xu25} to construct approximate Riemann solvers. However, using dense layers for this purpose can significantly increase the number of parameters.

\item [-]
The combination of lower- and higher-order spatial approximations is typically achieved using flux limiters. Since this process involves heuristic steps, neural network-based optimization methods  \cite{hillebrand23}, \cite{nguyen22} can outperform traditional analytic techniques.

 \item [-]
 Additionally, neural networks can be utilized to estimate local smoothness and determine optimal combinations of lower-order reconstructions in the so-called weighted essentially non-oscillatory (WENO) spatial discretizations, as proposed in \cite{kossaczka24} and \cite{nogueira24}.

\item [-]
Neural networks can also be used to develop numerical fluxes
 for solving conservation laws. Based on this, the authors in \cite{chen24} develop conservative schemes, which may contain several time steps. This can also be done without any knowledge of the analytic flux function. Our approach is closely related to this work.

 \end{itemize}

Nevertheless, a common weakness of many related approaches is their lack of emphasis on minimizing the number of network parameters. In other words, the structure of the underlying neural network is often not optimized in a principled way. Note that the minimization of the number of parameters in certain families of neural networks is also of high interest, as this can set the limit of the related numerical methods. For an overview of the architectures of the related neural networks, see Table \ref{compare_no_par} in Section \ref{discussion}.

In this paper, we develop neural network-based numerical methods for
{ one-dimensional scalar conservation laws,} with these considerations in mind as well.
More specifically, while following the core principles of conventional finite-volume methods for solving conservation laws, our goal is to design neural networks of minimal complexity that can effectively or even completely replicate their behavior. {By optimizing the associated weights, i.e., training the associated network, we can further fine-tune and enhance the accuracy and efficiency of the associated numerical methods.} For this, we utilize a deep neural network mimicking simultaneously a number of time steps in a numerical method. In this way, using appropriate loss functions, which is also a core idea of our approach, we can enforce the stability of the proposed method.

As we point out in the numerical experiments, our approach - regarding the minimal number of parameters and the exact replication of conventional solvers - has several benefits:
\begin{itemize}
    \item
we can prevent overfitting effects while training the corresponding
neural network,
    \item
    we will be able to train the method over several time steps
    controlling its stability,
    \item
    we can minimize the computational complexity of the optimization process,
    \item
    we can make a clear link between the conventional Godunov and flux limiter type approaches and neural network-based approaches.
\end{itemize}

\subsection{Mathematical preliminaries}
Since we establish a clear link between the numerical methods of scalar conservation laws and neural networks, we shortly summarize the corresponding
notions and concepts.
\subsubsection{Numerical methods for scalar conservation laws}\label{sec_111}
The mathematical model of one-dimensional scalar conservation laws is given with the initial-boundary value problem
\begin{equation}\label{cons_law}
\begin{cases}
     \partial_t u(t,x) + \nabla\cdot f(u) (t,x) = 0\quad t\in (0,T),\; x\in\Omega\\
    u(0,x) = u_0(x) \quad x\in\Omega,
\end{cases}
\end{equation}
on the given { bounded domain
$\Omega\subset\er$},
which should be equipped with appropriate boundary conditions \cite{bardos79}. Here the function $u:(0,T)\times\Omega\to\er$ corresponds to the unknown quantity with a given initial state $u_0$.\\
Applying a spatial grid and the time step $\delta$, a conventional conservative finite volume-based numerical scheme reads as
\begin{equation}\label{cons_form}
u^{n+1}_j = u^n_j -
\frac{\delta}{|I_j|}\cdot
(\hat f_{j+\frac{1}{2}} - \hat f_{j-\frac{1}{2}}),
\end{equation}
where for all possible index $j$,
\begin{itemize}
\item
$|I_j|$ denotes the length of the $j$th grid cell $I_j$,
\item
$u^n_j$ denotes the cell average of $u$ at $n\delta$ in  $I_j$,
\item
$\hat f_{j+\frac{1}{2}}$ is an approximation of the time average of $f(u)$
at the interface of $I_j$ and $I_{j+1}$ over the time interval
$(n\delta, (n+1)\delta)$.
\end{itemize}
First we investigate the case when $\hat f_{j+\frac{1}{2}}:\er^2\to\er$ depends on
$u^n_{j}$ and $u^n_{j+1}$.\\
The real challenge in \eqref{cons_form} is to develop an
accurate numerical flux $\hat f_{j+\frac{1}{2}}$ ensuring stability.
Usually, this property is measured by the total variation (TV) (semi)norm, see \cite{leveque92}.
Therefore, starting with the classical work \cite{harten83}, many efforts were made to construct total variation diminishing (TVD) schemes.

The complexity of this problem has long been recognized. According to the classical  result in \cite{godunov59},
conservative TVD schemes with second-order spatial convergence cannot be linear even for linear equations. This key observation gives rise to apply possibly nonlinear neural networks for constructing numerical fluxes. This is based on one of the following principles:
\begin{itemize}
    \item
    One can switch between lower and higher-order approximations of the real flux. { In many cases, this procedure leads to the limitation of the numerical fluxes, a technique known as flux limiting}
    \cite{harten83}.
    \item
    Alternatively, based on an estimate
    of local smoothness, multiple flux approximations can be weighted adaptively to avoid oscillations. This principle leads to the WENO methods \cite{jiang96}, \cite{liu94}.
\end{itemize}

The ancestor of all modern methods is the so-called Godunov's scheme, which in the one-dimensional case both for {any function  $f\in C^1(\er)$} reads as
\begin{equation}\label{G_scheme}
    \hat f_{j+\frac{1}{2}} =
    \begin{cases}
       \max \limits_{u\in [u^n_{j+1},u^n_{j}]} f(u) \qquad \textrm{if}\;
       u^n_{j+1}\le u^n_{j}\\
       \min\limits_{u\in [u^n_{j},u^n_{j+1}]} f(u)\qquad
       \textrm{if}\;
       u^n_{j}\le u^n_{j+1}.
    \end{cases}
\end{equation}
{\emph{Remark:}
For strictly convex or concave functions, \eqref{G_scheme} can be rewritten in even more
concrete terms. This will be done if we rewrite
the flux as the output of a neural network.}\medskip

To extend this baseline numerical method and later on, the corresponding neural network, we use reconstruction schemes with flux limiters. To simplify the notations, the upper index $n$ will be dropped in the consecutive analysis.

The basic idea here is that keeping the local average $u_j$ on $I_j$, we also
define separate approximations on the right and left end of this interval.\\
In any case, for an accurate reconstruction on $I_j$, we need (at least), the values $u_{j-1}, u_{j}$ and $u_{j+1}$.
Computing an approximation of the local slope $s_*$, the reconstructed values
\begin{equation}\label{final_rec_step}
    u_j^L = u_j - \frac{h\cdot s_*}{2}\quad\textrm{and}\quad u_j^R = u_j + \frac{h\cdot s_*}{2}
\end{equation}
will be given on the left and right end of $I_j$, respectively.

In the framework of the conventional approach, the deviation $h\cdot s_*$ above is determined as
\begin{equation}\label{slope_basic}
  s := h\cdot s_* = (a+b)\cdot \Phi\left(\frac{a}{a+b}\right) :=
   (a+b)\cdot \Phi(r),
\end{equation}
with the local differences $a=u_{j}-u_{j-1}$ and $b=u_{j+1}-u_j$. {Here, instead of the conventional choice $\frac{a}{b}$, we use the ratio $r=\frac{a}{a+b}$ as proposed in} \cite{berger05}. {In this way, the corresponding (limiter) function $\Phi$ is supported on $[0,1]$, which greatly simplifies its optimization within our neural network.}

{ To satisfy the total variation diminishing (TVD) property, \eqref{slope_basic} must vanish whenever
$ab<0$. Consequently, $\Phi(r)=0$ in $\er\setminus [0,1]$. Since many widely used TVD limiter functions are piecewise linear, we restrict our attention to this class of limiter functions. Accordingly, for a fixed $K\in\mathbb{N}$, we assume}
\begin{equation}\label{phi_def}
    \Phi(r) = m_k(r-x_k) + \Phi(x_k) \quad x_k\le r\le x_{k+1}, \; k=0,1,\dots, K,
\end{equation}
where { $0=x_0<x_1<\ldots<x_K=1$ and $m_0, m_1, \dots, m_{K-1}$  denote the breakpoints} and the consecutive slopes, respectively; see Figure \ref{pw_lin}.\\
\begin{figure}[ht]
\centering
\begin{tikzpicture}
  \begin{axis}[
      width=13cm,
      height=6cm,
      axis lines=middle,
      xlabel={$x$},
      ylabel={$\Phi(x)$},
      xmin=-0.2, xmax=1.2,
      ymin=-0.2, ymax=1.2,
      xtick={0,0.25,0.5,0.75,1.0},
      xticklabels={$x_0$,$x_1$,$x_2$,$x_3$,$x_4=1$},
      ytick=\empty,
      domain=-0.2:1.2,
      samples=100,
      thick,
      clip=false,
      grid=major,
    ]
    \addplot[
      color=blue,
      mark=*,
    ] coordinates {
      (0.0, 0.0)
      (0.25, 0.5)
      (0.5, 0.3)
      (0.75, 0.9)
      (1.0, 0.0)
    };
       \addplot[
      color=blue,
      mark=,
    ] coordinates {
      (-0.2, 0)
      (0.0, 0.0)
      (0.25, 0.5)
      (0.5, 0.3)
      (0.75, 0.9)
      (1.0, 0.0)
      (1.2, 0)
    };
    \node at (axis cs:0.0,0.0) [below left] {$x_0$};
    \node at (axis cs:0.25,0.5) [above right] {$\Phi(x_1)$};
    \node at (axis cs:0.5,0.3) [below right] {$\Phi(x_2)$};
    \node at (axis cs:0.75,0.9) [above right] {$\Phi(x_3)$};
    \node at (axis cs:0.125,0.3) [above] {$m_0$};
    \node at (axis cs:0.375,0.35) [below] {$m_1$};
    \node at (axis cs:0.625,0.4) [above] {$m_2$};
    \node at (axis cs:0.875,0.65) [below] {$m_3$};
  \end{axis}
\end{tikzpicture}
\caption{The piecewise linear, continuous function $\Phi$ with breakpoints $x_0=0 < x_1 < x_2 < x_3 < x_4=1$. Function values $\Phi(x_i)$ are shown at each node, with slopes $m_k$ on each interval.}\label{pw_lin}
\end{figure}

\subsubsection{Neural networks in short}

In precise terms, artificial neural networks are directed graphs with certain groups of vertices such that we assign functions to their edges.

From the practical point of view, we identify neural networks with a certain kind of function composition $\mathcal{F}$ and with this, the
network is given as
\begin{equation}\label{NN}
\mathcal{F}:
\xx_0\xmapsto{g_1} \xx_1
\xmapsto{g_2} \xx_2
\xmapsto{g_3} \cdots
\xmapsto{g_N} \xx_N.
\end{equation}
Here the stages called the layers with the values
$\xx_0\in\er^{d_0}, \xx_1\in\er^{d_1}, \dots, \xx_N\in\er^{d_N}$.

In the classical setup, the function $g_j$
is given with
$$
g_j (\xx_{j-1})= \boldsymbol{\sigma_j}(A_j\xx_{j-1} + \bb_{j-1}),
$$
where $A_j:\er^{d_{j-1}}\to\er^{d_{j}} $ is linear, $\bb_j\in\er^{d_{j}}$ and  the so-called activation function $\boldsymbol{\sigma_j}: \er^{d_{j}}\to\er^{d_{j}}$ is given
with
$$
\boldsymbol{\sigma_j}(y_1, y_2, \dots, y_{d_j})^T =(\sigma_j(y_1),  \sigma_j(y_2), \dots, \sigma_j(y_{d_j}))^T,
$$
which together with the dimensions are fixed.
The entries of $A_1, A_2,\dots, A_N$ and
$b_1, b_2,\dots, b_N$ are called the parameters of the network. These should be optimized { so that $\mathcal{F}$ approximates the target function, which in our case, is} the solution operator $F: u(0,\cdot)\mapsto u(T,\cdot)$.

At the same time, we will complete the conventional network in \eqref{NN} with
layers corresponding to given functions, which do not involve tunable parameters. This will only very slightly increase the complexity of the
optimization problem for the network parameters.

\section{Results}
\label{sec_res}
Before introducing our neural network, we
lay down the main principles and the motivations behind it.
\subsection{Principles and main ideas}
\label{subsec_ideas}
The proposed neural network will correspond to the solution map
$u(0,\cdot)\to u(T,\cdot)$ of the conservation law in \eqref{cons_law}. Accordingly, the input layer contains the
interval averages $\uu^0 = (\dots, u_j^0, u_{j+1}^0, \dots)$ at the initial time $t=0$, while the output layer
$\uu^N = (\dots, u_j^N, u_{j+1}^N, \dots)$ approximates the averages at the final time $T = N\cdot\delta$.

\begin{itemize}
    \item
To mimic the conservative form \eqref{cons_form} of a general scheme, we should employ
a convolutional layer to take into account a stencil of the neighboring elements.
This idea remains also valid in the multidimensional case.

\item
The activation functions can make the scheme to be nonlinear, which is
a basic requirement to have higher-order accuracy.

\item
After constructing a few layers corresponding to a single time step,
we repeat them consecutively leading to a deep network. This has also several
advantages.

\begin{itemize}
    \item
    We can optimize the time stepping under the same hood regarding its accuracy and stability.
    \item
    We can keep the number of parameters at a low level, since in the consecutive blocks, the same ones will be used.
\end{itemize}

\item
The possible increment of the total variation can be incorporated into the
loss function and can be penalized using an appropriate weight.

\end{itemize}

\subsection{The neural networks corresponding to numerical methods}

\subsubsection{Construction of basic fluxes}\label{sec221}
We first discuss the neural network form of the Godunov method.
{We stress that, within this framework, the limiter can be obtained without any prior knowledge or analytical derivation. Instead, its parameters are determined through a standard optimization procedure, or, in the context of neural networks, through a learning procedure.}

For the most simple neural network setup, we use the following assumptions:
\begin{itemize}
    \item [(A1)]
    The function $f:\er\to\er$ is strictly convex.
     \item [(A2)]
    We have  $f(x)\ge 0$ with $f(0) = 0$.
    \item [(A3)]
    The function $f$ is even, i.e. $f(x) = f(-x)$.
\end{itemize}

Some traffic flow models assume $f(u)\ge 0$ since here $u$ denotes the vehicle density and the flux $f(u)=u\cdot v(u)$ represents the flow rate, which is non-negative in the absence of contraflow traffic.
On the other hand, here, with a zero density, really no flow rate is generated.

For the most general case, we use the notation $u^{\min}\in\er\cup-\infty\cup\infty$
for the minimum site of the strictly convex function $f$ and assume the following.
\begin{itemize}
    \item [(A11)]
  $u^{\min}\in \er$
\end{itemize}
The Godunov flux can be then expressed in an alternative form, which is related to neural networks, as stated in the following.

\begin{prop}\label{basic_prop_strict}
Using only assumptions (A1) and (A11),  the Godunov flux in \eqref{G_scheme} can be given as the function
$$
\begin{aligned}
&\hat{f}_{\NNN}(u^n_j,u^n_{j+1})\\
&=
\max\left\{f\left(\relu(u^{n}_j-u^{\min})+u^{\min}\right);
f\left(-\relu(u^{\min}-u^{n}_{j+1})+u^{\min}\right)\right\}.
\end{aligned}
$$
\end{prop}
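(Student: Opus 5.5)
The plan is to rewrite both arguments of the maximum with elementary identities and then split into cases according to the position of $u^n_j$ and $u^n_{j+1}$ relative to $u^{\min}$.

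Write $a=u^n_j$, $b=u^n_{j+1}$ and $c=u^{\min}$. Since $\relu(a-c)+c=\max\{a,c\}$ and $-\relu(c-b)+c=\min\{b,c\}$, the claimed formula reads
$$\hat f_{\NNN}(a,b)=\max\{f(\max\{a,c\}),\,f(\min\{b,c\})\}.$$
By (A1) and (A11), $f$ is strictly decreasing on $(-\infty,c]$, strictly increasing on $[c,\infty)$, and attains its global minimum at $c$. Consequently, on any interval $[p,q]$ the maximum of $f$ is $\max\{f(p),f(q)\}$. The minimum of $f$ on $[p,q]$ is $f(q)$ if $q\le c$, $f(p)$ if $p\ge c$, and $f(c)$ if $p\le c\le q$.

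We then compare with \eqref{G_scheme} case by case.

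\emph{Case $b\le a$.} The Godunov flux is $\max\{f(a),f(b)\}$.
\begin{itemize}
\item If $b\le c\le a$, the formula gives $\max\{f(a),f(b)\}$ directly.
\item If $c<b\le a$, it gives $\max\{f(a),f(c)\}=f(a)$. Monotonicity on $[c,\infty)$ gives $f(a)\ge f(b)$, so this equals $\max\{f(a),f(b)\}$.
\item If $b\le a<c$, it gives $\max\{f(c),f(b)\}=f(b)$. Monotonicity on $(-\infty,c]$ gives $f(b)\ge f(a)$, so again the two agree.
\end{itemize}

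\emph{Case $a\le b$.} The Godunov flux is $\min_{[a,b]}f$.
\begin{itemize}
\item If $a\le c\le b$, the Godunov flux is $f(c)$, and the formula gives $\max\{f(c),f(c)\}=f(c)$.
\item If $c<a\le b$, the Godunov flux is $f(a)$. The formula gives $\max\{f(a),f(c)\}=f(a)$, since $f(c)$ is the global minimum.
\item If $a\le b<c$, the Godunov flux is $f(b)$. The formula gives $\max\{f(c),f(b)\}=f(b)$.
\end{itemize}

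When $a=b$ the two branches of \eqref{G_scheme} coincide, so the overlap causes no ambiguity.

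There is no real obstacle. The only point needing care is the bookkeeping of the six orderings of $a,b,c$, and recognising that $f(c)$ never wins the outer maximum unless it is the correct answer; this follows because $c$ is the global minimiser. Strict convexity is used only to obtain the monotonicity on each side of $c$ and the existence of a unique minimiser, which is guaranteed by (A11).
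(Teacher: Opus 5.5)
Your proof is correct and follows the same route as the paper: a case analysis over the six orderings of $u^n_j$, $u^n_{j+1}$ and $u^{\min}$, using the monotonicity of $f$ on either side of $u^{\min}$. Rewriting the arguments as $\max\{u^n_j,u^{\min}\}$ and $\min\{u^n_{j+1},u^{\min}\}$ and working with non-strict inequalities are small improvements in presentation; the latter also covers the tie cases that the paper's strict-inequality case split leaves implicit.
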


\textbf{Proof.} We verify the statement by means of a case analysis. In each case, we use \eqref{G_scheme} and the definition of $\hat{f}_{\NNN}$.

For  $u^{\min}<u^{n}_j<u^{n}_{j+1}$,  the function $f$ is strictly monotone increasing in
$[u^{\min}, \infty)$ such that
$$
\hat{f}_{j+\egyk}(u^{n}_j,u^{n}_{j+1}) =  \min\limits_{u\in [u^n_{j},u^n_{j+1}]} f(u)
= f(u^{n}_j)
$$
and also, by definition,
$$
\hat{f}_{\NNN}(u^{n}_j,u^{n}_{j+1}) = \max\left\{ f(u^{n}_j);f(u^{\min})\right\}= f(u^{n}_j).
$$

{ For $u^{\min}<u^{n}_{j+1}<u^{n}_j$,
we simply obtain
$$
\hat{f}_{j+\egyk}(u^{n}_j,u^{n}_{j+1}) =  \max\limits_{u\in [u^n_{j+1},u^n_{j}]} f(u) = f(u^{n}_j)
 = \max\left\{ f(u^{n}_j);f(u^{\min})\right\} = \hat{f}_{\NNN}(u^{n}_j,u^{n}_{j+1}).
$$
In a similar way, for $u^{n}_j<u^{\min}<u^{n}_{j+1}$, we get
$$
\hat{f}_{j+\egyk}(u^{n}_j,u^{n}_{j+1}) =  \min\limits_{u\in [u^n_{j},u^n_{j+1}]} f(u) = f(u^{\min})
= \max\left\{ f(u^{\min});f(u^{\min})\right\}=
\hat{f}_{\NNN}(u^{n}_j,u^{n}_{j+1})
$$
and also, for $u^{n}_{j+1}<u^{\min}<u^{n}_j$, we obtain
$$
\hat{f}_{j+\egyk}(u^{n}_j,u^{n}_{j+1}) = \max\left\{ f(u^{n}_j);f(u^{n}_{j+1})\right\} =
\hat{f}_{\NNN}(u^{n}_j,u^{n}_{j+1}).
$$
For $u^{n}_j<u^{n}_{j+1}<u^{\min}$, $f$ is strictly monotone decreasing in
$(-\infty, u^{\min}]$, such that
$$
\hat{f}_{j+\egyk}(u^{n}_j,u^{n}_{j+1}) =  \min\limits_{u\in [u^n_{j},u^n_{j+1}]} f(u)
= f(u^{n}_{j+1}) = \max\left\{ f(u^{\min});f(u^{n}_{j+1})\right\} =
\hat{f}_{\NNN}(u^{n}_j,u^{n}_{j+1}).
$$

Finally, in case of $u^{n}_{j+1}<u^{n}_j<u^{\min}$,  $f$ is again strictly monotone decreasing in $(-\infty, u^{\min}]$ such that
$$
\hat{f}_{j+\egyk}(u^{n}_j,u^{n}_{j+1}) = \max\limits_{u\in [u^n_{j+1},u^n_{j}]} f(u) = f(u^{n}_{j+1}) = \max\left\{ f(u^{\min});f(u^{n}_{j+1})\right\}=
{f}_{\NNN}(u^{n}_j,u^{n}_{j+1}).
$$}
In this way, in all cases, we have verified the equality
$$
\hat{f}_{j+\egyk}(u^{n}_j,u^{n}_{j+1}) = {f}_{\NNN}(u^{n}_j,u^{n}_{j+1}).
\qquad \square
$$

Alternatively, we may use more
conditions to simplify the neural network-related form $f_{\NNN}$ of the Godunov flux in Proposition \ref{basic_prop_strict}.
\begin{prop}\label{basic_prop}
Using the assumptions (A1)-(A3), the Godunov flux in \eqref{G_scheme} can be given as the function
    \begin{equation}\label{G_NN1}
 \begin{pmatrix}
        u_j^n\\u_{j+1}^n
    \end{pmatrix}
    \xlongrightarrow{A \cdot }
    \begin{pmatrix}
     {\;}\\{\;}
    \end{pmatrix}
     \xlongrightarrow{ \textrm{ReLu}}
    \begin{pmatrix}
    {\;}\\{\;}
    \end{pmatrix}
     \xlongrightarrow{f}
    \begin{pmatrix}
    {\;}\\{\;}
    \end{pmatrix}
     \xlongrightarrow{\max}
     \hat f_{j+\frac{1}{2}},
    \end{equation}

    where $A = \begin{pmatrix}
        1 &0\\ 0&-1
    \end{pmatrix}$.

\end{prop}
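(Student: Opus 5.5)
We derive Proposition \ref{basic_prop} as a special case of Proposition \ref{basic_prop_strict}, so no new case analysis is needed.

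\emph{Step 1: locate the minimum.} Under (A1)--(A3) the minimum site is $u^{\min}=0$. Indeed, (A2) gives $f\ge 0=f(0)$, so $0$ is a minimizer. Strict convexity (A1) makes the minimizer unique. Hence (A11) holds with $u^{\min}=0$, and Proposition \ref{basic_prop_strict} applies:
$$
\hat f_{j+\frac12}=\max\left\{f\left(\relu(u^n_j)\right);\,f\left(-\relu(-u^n_{j+1})\right)\right\}.
$$

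\emph{Step 2: remove the minus sign.} By evenness (A3), $f(-\relu(-u^n_{j+1}))=f(\relu(-u^n_{j+1}))$. Therefore
$$
\hat f_{j+\frac12}=\max\left\{f\left(\relu(u^n_j)\right);\,f\left(\relu(-u^n_{j+1})\right)\right\}.
$$

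\emph{Step 3: read off the network.} This last expression is exactly the composition in \eqref{G_NN1}. Applying $A$ gives $A(u^n_j,u^n_{j+1})^T=(u^n_j,-u^n_{j+1})^T$. Then ReLU and $f$ act componentwise, and the final layer takes the maximum of the two entries. The identity then holds for all inputs $(u^n_j,u^n_{j+1})\in\er^2$.

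The only step needing care is Step 1, namely that (A2) really forces $u^{\min}=0$, so that (A11) is available. Evenness alone already suggests the symmetric minimizer, and (A2) with uniqueness from (A1) settles it. If one preferred not to rely on Proposition \ref{basic_prop_strict}, the fallback is to redo its six-case analysis directly. In that case, evenness is used only to replace $f(u^n_{j+1})$ by $f(-u^n_{j+1})$ in the cases $u^n_{j+1}<0$. The sign convention in Step 2 is exactly where the entry $-1$ of $A$ comes from.
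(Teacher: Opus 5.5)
Your proof is correct, but it takes a different route from the paper. The paper does not use Proposition~\ref{basic_prop_strict}. It checks the identity directly on the four closed quadrants of the $(u_j^n,u_{j+1}^n)$-plane: there the ReLU layer outputs $(u_j^n,0)$, $(0,0)$, $(0,-u_{j+1}^n)$ and $(u_j^n,-u_{j+1}^n)$, and the resulting maximum is matched with \eqref{G_scheme} using the monotonicity of $f$ on $(-\infty,0]$ and $[0,\infty)$, plus evenness in the third and fourth quadrants.

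You instead show that the hypotheses force $u^{\min}=0$, specialize the formula of Proposition~\ref{basic_prop_strict}, and use (A3) only to remove the outer sign in $f(-\relu(-u_{j+1}^n))$. All three steps are valid.

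What your reduction buys is brevity and a clear view of the structure. Proposition~\ref{basic_prop} is exactly the case $u^{\min}=0$ of Proposition~\ref{basic_prop_strict}: setting $u^{\min}=0$ removes the bias terms, and evenness removes the sign flip after the ReLU, so only the matrix $A$ remains as a linear layer. It also shows that (A2) serves only to locate the minimizer, and even that is redundant, since convexity and evenness already give $f(0)\le\frac12\bigl(f(x)+f(-x)\bigr)=f(x)$.

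What the paper's argument buys is self-containment and full coverage. Because it works with closed quadrants, it handles tie cases such as $u_j^n=0$, $u_{j+1}^n=0$ or $u_j^n=u_{j+1}^n$ explicitly. The written proof of Proposition~\ref{basic_prop_strict} lists only strict orderings, so your route tacitly relies on the easy continuity of both sides in $(u_j^n,u_{j+1}^n)$ to cover those boundary cases.
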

\emph{Proof:}
First we note that according to (A1)-(A2), the function $f$ is monotone decreasing in $\er^-$ and  monotone increasing in $\er^+$.\\
We rewrite the Godunov's scheme in
the quadrants determined by $(u_j^n, u_{j+1}^n)$. \\
Applying the first two components in \eqref{G_NN1}, we get
$ \begin{pmatrix}
      \textrm{ReLu}(u_j^n)\\
      \textrm{ReLu}(-u_{j+1}^n)
    \end{pmatrix} $,
which has the value
$ \begin{pmatrix}
      u_j^n\\
      0
    \end{pmatrix} $,
    $ \begin{pmatrix}
      0\\0
    \end{pmatrix} $,
    $ \begin{pmatrix}
      0\\
      -u_{j+1}^n
    \end{pmatrix} $ and
    $ \begin{pmatrix}
      u^n_j\\- u_{j+1}^n
    \end{pmatrix} $ in the first, second, third and fourth quadrant, respectively. {The maximum of these values will be related with the Godunov flux in \eqref{G_scheme}.}


\begin{itemize}
    \item
    In the first quadrant, \eqref{G_NN1} gives
    finally {
   $$
   \max\{0, f(u_j^n)\} = f(u_j^n) = \begin{cases}
    \displaystyle{\min_{[u_j^n, u_{j+1}^n]} f   \quad 0\le u_j^n\le u_{j+1}^n},\\
    \displaystyle{\max_{[u_{j+1}^n, u_{j}^n]} f \quad u_j^n\ge u_{j+1}^n}.
   \end{cases}
   $$
 }
\item
In the second quadrant, { $0\in [u_j^n, u_{j+1}^n]$,  so that (A2)
implies
$$
\max\{0, 0\} = 0 = \min_{[u_j^n, u_{j+1}^n]} f.$$
}

\item
    In the third quadrant, { in case of \eqref{G_NN1},  using also (A3), we have
    $$
    \max\{0, f(-u_{j+1}^n)\} = f(-u_{j+1}^n) =
    f(u_{j+1}^n) =
    \begin{cases}
     \displaystyle{\min_{[u_j^n, u_{j+1}^n]}f\quad    u_j^n\le u_{j+1}^n\le 0},\\
    \displaystyle{\max_{[u_{j+1}^n, u_{j}^n]} f\quad u_{j+1}^n\le u_{j}^n\le 0.}
    \end{cases}
    $$
}
\item
In the fourth quadrant, where $u_j^n\ge 0\ge u_{j+1}^n$, \eqref{G_NN1} gives
{
$$
\max\{f(u_j^n), f(-u_{j+1}^n)\} = \max\{f(u_j^n), f(u_{j+1}^n)\} = \max_{[u_{j+1}^n, u_{j}^n]} f.
$$
}
\end{itemize}
Summarized, in all cases, we have verified the equality in the statement.
\quad $\square$\medskip

We first note that the matrix multiplications in \eqref{G_NN1}
can also be recognized as the effect of a convolution layer. In concrete terms, the rows of the matrix $A_1$ and $A_2$ can be identified with filters, giving the two components of $A\cdot \begin{pmatrix}
    u^n_j\\ u^n_{j+1}
\end{pmatrix}$ in \eqref{G_NN1}.

Using then \eqref{G_NN1} and the notations
$$
\hat \ff = (\dots, \hat f_{j-\frac{1}{2}}, \hat f_{j+\frac{1}{2}}, \dots),
\qquad
\mathbf{I} = (\dots, I_{j}, I_{j+1}, \dots)
$$
with componentwise operations, the conservative
scheme in \eqref{cons_form} can be given as a residual neural network with a skip
connection as shown in Figure \ref{res_fig}.

\begin{figure}[h]
    \centering
    \begin{tikzpicture}[
        node distance=2cm and 1.5cm,
        every node/.style={draw, minimum height=1cm, minimum width=1.5cm, align=center},
        skip/.style={-{Latex[length=2mm]}, thick},
        layer/.style={rectangle, draw=blue!60, fill=blue!20},
        input/.style={rectangle, draw=black!60, fill=gray!20},
        output/.style={rectangle, draw=green!60, fill=green!20}
      ]

      \node[input] (input) {Input\\ $\uu^n$};
      \node[layer, right=of input] (layer1) {Layers\\ in \eqref{G_NN1}};
      \node[layer, right=of layer1] (layer2) {Fluxes \\ $\hat\ff$};
      \node[output, right=of layer2] (add) {Output \\ $\uu^{n+1} = \uu^n - \frac{\delta}{\mathbf{I}} \hat\ff$};

      \draw[->, thick] (input) -- (layer1);
      \draw[->, thick] (layer1) -- (layer2);
      \draw[->, thick] (layer2) -- (add);

      \draw[skip] (input.north) .. controls +(up:1.2cm) and +(up:1.2cm) .. (add.north);

    \end{tikzpicture}
    \caption{The residual block for computing a single time step using a skip connection.}\label{res_fig}
\end{figure}
Finally, a consecutive application of the operations in Figure \ref{res_fig} gives the full conservative numerical method. This final network form is shown in Figure \ref{res_fig_full}.

\begin{figure}[h]
    \centering
    \begin{tikzpicture}[
        scale=0.85, transform shape,
        node distance=0.6cm and 0.8cm,
        every node/.style={draw, minimum height=0.8cm, minimum width=0.8cm, align=center},
        skip/.style={-{Latex[length=2mm]}, thick},
        layer/.style={rectangle, draw=blue!60, fill=blue!20},
        input/.style={rectangle, draw=black!60, fill=gray!20},
        output/.style={rectangle, draw=green!60, fill=green!20}
      ]

      \node[input] (input1) {$\uu^0$};
      \node[layer, right=of input1] (layer1a) {Layers\\in (5)};
      \node[layer, right=of layer1a] (layer2a) {
      $\hat\ff$};
      \node[output, right=of layer2a] (out1) {$\uu^1$};

      \draw[->, thick] (input1) -- (layer1a);
      \draw[->, thick] (layer1a) -- (layer2a);
      \draw[->, thick] (layer2a) -- (out1);
      \draw[skip] (input1.north) .. controls +(up:1.2cm) and +(up:1.2cm) .. (out1.north);

      \node[draw=none, right=.5cm of out1] (dots) {$\cdots$};

    \node[output, right=.5cm of dots] (layer1b) {$\uu^{N-1}$};
    \node[layer, right=.5cm of layer1b] (layer1bb) {Layers\\ in (5)};
      \node[layer, right=of layer1bb] (layer2b) {$\hat\ff$};
      \node[output, right=of layer2b] (out2) {$\uu^N$};

      \draw[->, thick] (out1.east) -- (dots.west);
      \draw[->, thick] (dots.east) -- (layer1b);
      \draw[->, thick] (layer1b) -- (layer1bb);
      \draw[->, thick] (layer1bb) -- (layer2b);
      \draw[->, thick] (layer2b) -- (out2);
      \draw[skip] (layer1b.north) .. controls +(up:1.2cm) and +(up:1.2cm) .. (out2.north);

    \end{tikzpicture}
    \caption{The neural network for the fully discretized numerical solution corresponding to \eqref{cons_form}.}
    \label{res_fig_full}
\end{figure}

\subsubsection{Construction of reconstruction-based fluxes}

{Using the continuity of $\Phi$ in Section} \ref{sec_111},
{we have $\Phi(x_0)=\Phi(0) = 0$ and also, for an arbitrary $k\in \{1,2,\dots, K\}$ },
\begin{equation*}
    \Phi(x_k) = 
    \sum_{j=1}^k
    m_{j-1}\cdot (x_j-x_{j-1}).
\end{equation*}
Accordingly, for $k\in \{0,1,\dots, K-1\}$, we also define the sets
\begin{equation}\label{def_T_k}
T_k = \left\{(a,b): a(1-x_{k+1}) - bx_{k+1}
\le 0 \le
a(1-x_{k}) - bx_{k} \right\}\subset[0,\infty)^2,
\end{equation}
which are shown in Figure \ref{fig_T_k}.
Similarly, we need the following cones in $(-\infty,0]^2$:
$$
T_{-k} = \left\{(a,b):  bx_{k+1} - a(1-x_{k+1})
\le 0 \le
 bx_{k} - a(1-x_{k})  \right\}.
$$

\begin{figure}[ht]
\centering
\begin{tikzpicture}[scale=5]

  \draw[->] (0,0) -- (1.05,0) node[right] {$a$};
  \draw[->] (0,0) -- (0,1.05) node[above] {$b$};

  \fill[blue!20]
    (0,1) -- (0,0) -- ({1/3},1);

  \fill[green!30]
    (0,0) -- (1,1) -- (1,1/3) -- (0,0) -- cycle;

  \fill[orange!30]
    (0,0) -- (1,1) -- (1,1) -- ({1}, {1/3}) -- cycle;

  \fill[red!30]
    (0,0) -- (1,0) -- (1,{1/3}) -- cycle;

  \draw[dashed, thick] (0,0) -- (1,1) node[anchor=north west] {$b = a$};
  \draw[dashed, thick] (0,0) -- (1,{1/3}) node[anchor=north west] {$b = \frac{a}{3}$};
  \draw[dashed, thick] (0,0) -- ({1/3},1) node[anchor=south east] {$b = 3a$};

  \node at (0.1,0.9) {$T_0$};
  \node at (0.4,0.65) {$T_1$};
  \node at (0.7,0.4) {$T_2$};
  \node at (0.8,0.1) {$T_3$};

\end{tikzpicture}
\caption{Partition of the first quadrant into regions \(T_0\) to \(T_3\) based on linear inequalities for \(a\) and \(b\) in
\eqref{def_T_k}.}
\label{fig_T_k}
\end{figure}

Note that relating the geometric representations in Figure \ref{pw_lin} and \ref{fig_T_k}, we also have
\begin{equation}\label{r1}
r\in [x_k, x_{k+1}]\Leftrightarrow (a,b)\in T_k
\quad\textrm{for}\;(a,b)\subset [0,\infty)^2
\end{equation}
and similarly,
\begin{equation}\label{r2}
r\in [x_k, x_{k+1}]\Leftrightarrow (a,b)\in T_{-k}
\quad\textrm{for}\;(a,b)\subset (-\infty, 0]^2.
\end{equation}

Using \eqref{r1} and \eqref{r2} and inserting \eqref{phi_def} into \eqref{slope_basic} gives that for all
$(a,b)\in T_k\cup T_{-k}$, we can rewrite the slope $s$  as
\begin{equation}\label{s_final}
\begin{aligned}
    s(a,b)&= (a+b) (m_k\cdot \left(\frac{a}{a+b} - x_k\right) + \Phi(x_k)) \\
   &= a\cdot m_k - a\cdot m_kx_k + a\cdot\Phi(x_k) - b\cdot m_kx_k + b\cdot\Phi(x_k) \\
    &=
    a\cdot(\Phi(x_k) + m_k - m_k x_k) + b\cdot(\Phi(x_k)-m_kx_k).
\end{aligned}
\end{equation}
Our neural network is based on this representation with the inputs $a$ and $b$.
In case of $a,b\ge 0$, {we seek the slope function $s$ in the form}
\begin{equation*}
    s(a,b) =
    \sum_{k=0}^n t_k\cdot \textrm{ReLu}((1-x_k)\cdot \textrm{ReLu}\:(a) - x_k\cdot \textrm{ReLu}\: (b)).
\end{equation*}
Regarding this, we have the following statement.
\begin{theorem}
    Using the parameters $t_k = m_k-m_{k-1}$,
the function
$
s=s_1-s_3:\er^2\to\er
$
with
$$
\begin{aligned}
s_1(a,b) &=
m_0\cdot \textrm{ReLu}\:(a)\\
&+
    \sum_{k=1}^{n-1} (m_k-m_{k-1})\cdot \textrm{ReLu}\:((1-x_k)\cdot\textrm{ReLu}\: (a)  - x_k\cdot\textrm{ReLu}\: (b))\\
\end{aligned}
$$
 and
$$
\begin{aligned}
s_3(a, b) &=  - s_1(-a, -b) =
-m_0\cdot \textrm{ReLu}\:(-a) \\ &-
    \sum_{k=1}^{n-1} (m_k-m_{k-1})\cdot
    \textrm{ReLu}\:((1-x_k)\cdot \textrm{ReLu}\:(-a) -x_k\cdot\textrm{ReLu}\:(-b) )
\end{aligned}
$$
gives the one defined in \eqref{s_final}.
\end{theorem}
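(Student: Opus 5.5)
The plan is to check the identity on each of the four closed quadrants of the $(a,b)$-plane, using the cone decomposition \eqref{r1}--\eqref{r2} and an Abel summation. I read $n$ as $K$. I also use that $\Phi$ is continuous and vanishes outside $[0,1]$, so $\Phi(0)=\Phi(1)=0$. The first step is the observation
$$(1-x_k)a-x_kb=(a+b)(r-x_k),\qquad r=\tfrac{a}{a+b}.$$
For $a,b\ge 0$, the inner argument of the $k$-th ReLU is therefore nonnegative exactly when $r\ge x_k$. By \eqref{r1}, on $T_k$ precisely the terms with index $j\le k$ are active.

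\emph{First quadrant.} Take $(a,b)\in T_k$ with $a,b\ge0$. Every term of $s_3$ contains $\textrm{ReLu}(-a)$ or $\textrm{ReLu}(-b)$, and these vanish, so $s_3=0$ and
$$s_1(a,b)=m_0a+\sum_{j=1}^{k}(m_j-m_{j-1})\bigl((1-x_j)a-x_jb\bigr).$$
The main computation is to collect the coefficients of $a$ and $b$ by telescoping. The key identity is the summation by parts
$$\sum_{j=1}^k (m_j-m_{j-1})x_j = m_kx_k-\sum_{j=1}^k m_{j-1}(x_j-x_{j-1}) = m_kx_k-\Phi(x_k),$$
where the last equality is the formula for $\Phi(x_k)$ stated just before \eqref{def_T_k}. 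With it, the coefficient of $a$ is $m_k-m_kx_k+\Phi(x_k)$ and the coefficient of $b$ is $\Phi(x_k)-m_kx_k$. This is exactly \eqref{s_final}.

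\emph{Mixed quadrants.} Here the TVD requirement says $s$ must vanish. For $a\ge0\ge b$, the term $\textrm{ReLu}(b)=0$, so $s_3=0$ and
$$s_1=\Bigl(m_0+\sum_{k=1}^{K-1}(m_k-m_{k-1})(1-x_k)\Bigr)a.$$
This equals the first-quadrant formula at $b=0$, i.e. at $r=1$ on $T_{K-1}$, so it is $a\,\Phi(1)=0$. Equivalently, the same summation by parts with $k=K$ applies. For $a\le0\le b$, every term of $s_1$ is zero, since $\textrm{ReLu}(-x_kb)=0$. The term $s_3$ reduces by symmetry to the previous case and vanishes for the same reason.

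\emph{Third quadrant.} For $a,b\le0$ we have $s_1=0$, and $s_3(a,b)=-s_1(-a,-b)$. Applying the first-quadrant computation to $(-a,-b)$, which lies in $T_k$ exactly when $(a,b)\in T_{-k}$ by \eqref{r2}, gives $-s_1(-a,-b)$ equal to the linear expression \eqref{s_final} evaluated at $(a,b)$, since that expression is homogeneous of degree one.

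The step I expect to be the main obstacle is the sign bookkeeping in this last case. With $s_3$ defined as $-s_1(-a,-b)$, it is $s_3$ itself that reproduces \eqref{s_final} on $T_{-k}$. So the combination that matches on all four quadrants is $s_1+s_3$, or equivalently $s_1-s_1(-\cdot,-\cdot)$. Before writing the final proof I would recheck this against the intended convention in the statement. Apart from that, the only real content is the summation-by-parts identity above and the boundary fact $\Phi(1)=0$.
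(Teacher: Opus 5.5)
Your argument is correct and is essentially the paper's proof: the same four-quadrant case analysis, with your summation by parts doing the telescoping of \eqref{s11}, the mixed quadrants disposed of via $\Phi(1)=0$ as in \eqref{a_pos}, and the third quadrant reduced to the first through $s_3(a,b)=-s_1(-a,-b)$ and homogeneity. Your sign concern is justified and needs no rechecking: since $s$ is odd, on $a,b<0$ one has $s_1=0$ and $s_3=s$, so the identity that actually holds is $s=s_1+s_3$ (for minmod with $K=2$ and $(a,b)=(-1,-2)$ one gets $s_1=0$, $s_3=-1=s$, but $s_1-s_3=1$); the paper's proof reaches $s_1-s_3$ only through the erroneous step $-s(-a,-b)=-s(a,b)$, and the paper itself uses $s=s_1+s_3$ in the caption of Figure~\ref{fig:slope_block} and in the complete network.
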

\emph{Proof:} {Applying case analysis,}
we first investigate $s_1$ in the four quadrants of $\er^2$.
\begin{itemize}
    \item
 { In case of $a,b>0,$ we have that $(a,b)\in T_j$ for some $j\in\{0,1,\dots,n\}$,
such that}
$$
a(1-x_k) - bx_k\ge 0, \quad k = 0, 1, 2,\dots, j,
$$
and
$$
a(1-x_k) - bx_k\le 0, \quad k = j+1,j+2,\dots, n.
$$
In this way,
\begin{equation}\label{s11}
\begin{aligned}
s_1(a,b) &=
    m_0\cdot a +
    \sum_{k=1}^{j} (m_k-m_{k-1})\cdot (a(1-x_k) - bx_k)\\
    &=
     a\cdot\left(m_0 x_1 + m_1 (x_2-x_1) + \dots + m_{j-1} (x_j-x_{j-1}) + m_j(1-x_j)\right)\\
&+
 b\cdot \left(m_0 x_1 + m_1 (x_2-x_1) + \dots + m_{j-1} (x_{j}-x_{j-1}) - m_jx_j\right)\\
    &=a\cdot(\Phi(x_j) + m_j(1-x_j))
    + b\cdot \left(\Phi(x_j) - m_jx_j\right),
\end{aligned}
\end{equation}
{which really coincides with \eqref{s_final}.}

  \item
In case of { $a<0$,  we obtain
\begin{equation}\label{a_neg}
 s_1(a,b) =\begin{cases}
      \displaystyle{\sum_{k=1}^{j} (m_k-m_{k-1})\cdot\relu((-x_k)\cdot\relu( b)) = 0 \quad \textrm{for}\; b> 0},\\
     0 \quad \textrm{for}\; b< 0.
  \end{cases}
\end{equation}
}
\item
In case of $a>0$ and $b<0$, the relation $\relu (b) = 0$ holds, such that based on \eqref{s11} with $j=n-1$, we get
\begin{equation}\label{a_pos}
\begin{aligned}
s_1(a,b) &=
m_0\cdot a +
    \sum_{k=1}^{n-1} (m_k-m_{k-1})\cdot  a\cdot (1-x_k)  \\
    &=
    { a\cdot (\Phi(x_{n-1}) + m_{n-1}\cdot(1-x_{n-1}) ) = a\cdot \Phi(1)=0.}
\end{aligned}
\end{equation}
\end{itemize}
Second, we investigate $s_3$ in the four quadrants of $\er^2$.
\begin{itemize}
    \item
In case of { $a>0$, the identity in \eqref{a_neg}, while for $a<0, b>0$, \eqref{a_pos} implies} that
$
s_3(a,b) = - s_1(-a,-b) = 0 $.
\item
Finally, in case of of $a<0$ and $b<0$,  \eqref{s_final} gives
$$
s_3(a,b) = - s_1(-a,-b) = -s(-a,-b) = -s(a,b).
$$
\end{itemize}
Summarized, in each quadrant, we indeed have $s_1(a,b) - s_3(a,b) = s(a,b)$, as stated in the theorem.\quad$\square$\medskip

The neural network corresponding to the present section is shown in Figure \ref{fig:slope_block}. Here $n=2$ and the linear transformation
$\displaystyle{\mathrel{\mathop{\textrm{lin}}_{1,-1}}}$ corresponds to
$a_j = u^n_j - u^n_{j-1}$ and $b_j = u^n_{j+1} - u^n_{j}$, while
$\displaystyle{\mathrel{\mathop{\textrm{lin}}_{\textrm{full}}}}$ to the final summation in the definition of $s_1$ and $s_3$, respectively.

\begin{figure}[htbp]
  \centering
  \begin{adjustbox}{width=\textwidth}
\begin{tikzcd}
\begin{pmatrix}
        u_{j-1}^n\\u_j^n\\u_{j+1}^n
    \end{pmatrix}
    \arrow[r, "\lin"]\arrow[dr, "\lin"']
    & \begin{pmatrix}
        \vdots\\
        \begin{bmatrix}
        a_{j-1}\\ b_{j-1}
    \end{bmatrix}\\
    {}
    \\ \begin{bmatrix}
        a_j\\ b_j
    \end{bmatrix} \\{}\\
     \begin{bmatrix}
        a_{j+1}\\ b_{j+1}
    \end{bmatrix}\\
    \vdots
    \end{pmatrix}
   \arrow[r, "\relu"]
   &
    \begin{pmatrix}
        {}\\{}\\{}\\{}\\{}\\{}\\{}\\{}
    \end{pmatrix}
    \arrow[r,"\lin"]
    &
    \begin{pmatrix}
        \begin{bmatrix}
        (1-x_0)\cdot \relu a_j\\x_0\cdot \relu b_j
    \end{bmatrix}
    \\{}\\
    \begin{bmatrix}
        (1-x_1)\cdot\relu a_j\\x_1\cdot \relu b_j
    \end{bmatrix}
    \\{}\\
    \begin{bmatrix}
        (1-x_2)\cdot\relu a_j\\x_2\cdot\relu b_j
    \end{bmatrix}
    \end{pmatrix}
    \arrow[r,"\lin","1; -1"'] & \begin{pmatrix}
        {}\\{}\\{}\\{}\\{}\\{}\\{}\\{}
    \end{pmatrix}
    \arrow[r,"\relu"] & \begin{pmatrix}
        {}\\{}\\{}\\{}\\{}\\{}\\{}\\{}
    \end{pmatrix}
    \arrow[r,"\lin", "\textrm{full}"'] &
       s_1
    \\
                        &
                \begin{pmatrix}
        \vdots\\
        \begin{bmatrix}
        -a_{j-1}\\ -b_{j-1}
    \end{bmatrix}\\
    {}
    \\ \begin{bmatrix}
        -a_j\\ -b_j
    \end{bmatrix} \\{}\\
     \begin{bmatrix}
        -a_{j+1}\\ -b_{j+1}
    \end{bmatrix}\\
    \vdots
    \end{pmatrix}
    \arrow[r, "\relu"]
          &
    \begin{pmatrix}
        {}\\{}\\{}\\{}\\{}\\{}\\{}\\{}
    \end{pmatrix}
    \arrow[r,"\lin"]
    &
    \begin{pmatrix}
        \begin{bmatrix}
        (1-x_0)\cdot \relu (-a_j)\\x_0\cdot \relu (-b_j)
    \end{bmatrix}
    \\{}\\
    \begin{bmatrix}
        (1-x_1)\cdot\relu (-a_j)\\x_1\cdot \relu (-b_j)
    \end{bmatrix}
    \\{}\\
    \begin{bmatrix}
        (1-x_2)\cdot\relu (-a_j)\\x_2\cdot\relu (-b_j)
    \end{bmatrix}
    \end{pmatrix}
    \arrow[r,"\lin","1; -1"'] & \begin{pmatrix}
        {}\\{}\\{}\\{}\\{}\\{}\\{}\\{}
    \end{pmatrix}
    \arrow[r,"\relu"] & \begin{pmatrix}
        {}\\{}\\{}\\{}\\{}\\{}\\{}\\{}
    \end{pmatrix}
    \arrow[r,"\lin", "\textrm{full}"'] & s_3
\end{tikzcd}
\end{adjustbox}

\caption{A diagram showing the subnetwork, which generates the reconstructed slopes with $s=s_1+s_3$.}
  \label{fig:slope_block}
\end{figure}

\subsection{The complete neural network and loss function}
Using the reconstructed  $s=s_1+s_3$, we complete the network in \eqref{G_NN1} as follows.

\begin{equation*}
 \begin{pmatrix}
     u_{j-1}^n\\u_j^n\\u_{j+1}^n
    \end{pmatrix}
    \rightarrow
   \begin{pmatrix}
     s_1\\s_3
    \end{pmatrix}
 \rightarrow
    \begin{pmatrix}
     u_j^n - \frac{hs}{2}\\
     u_j^n + \frac{hs}{2}
    \end{pmatrix}
    \xlongrightarrow{A\cdot}
    \begin{pmatrix}
     {\;}\\{\;}
    \end{pmatrix}
     \xlongrightarrow{ \textrm{ReLu}}
    \begin{pmatrix}
    {\;}\\{\;}
    \end{pmatrix}
     \xlongrightarrow{f}
    \begin{pmatrix}
    {\;}\\{\;}
    \end{pmatrix}
     \xlongrightarrow{\max}
     \hat f_{j+\frac{1}{2}},
    \end{equation*}
  where in the first step, we have used the network in Figure  \ref{fig:slope_block}.
{ The successful training of a neural network requires an appropriate loss function.} Inspired by the theory of the conventional numerical methods, in either case, we used the following one.
\begin{equation}\label{loss_def}
\begin{aligned}
&L:\er^{N}\to\er,\\
&L(u_{\textrm{pred},t}) = \|u_{\textrm{pred},t} - u(t,\cdot)\|_2^2
+ w\cdot
(\|u_{\textrm{pred},t}\|_{\TV} -
\|u(t, \cdot)\|_{\TV})^2,
\end{aligned}
\end{equation}
where $\|\cdot\|_{\TV}:\er^N\to\er$ denotes total variation seminorm with
$$
\|\mathbf{w}\|_{\TV} =
\sum_{j=1}^{N-1} |w_{j+1}-w_j|,
$$
and $u(t,\cdot)$ denotes the analytic solution at the gridpoints as the
training data. Finally, $u_{\textrm{pred},t}$  is for the output of the
neural network-based numerical solution.\\
In this way, we can enforce the TVD property of our scheme. We also know that TVD schemes are nonlinear and avoid spurious oscillations, but they are necessarily only first-order accurate at extrema. To balance between these properties, we use
the weight $w$ in \eqref{loss_def}.

\subsection{Numerical experiments}
We perform numerical experiments to train the network parameters and
compute the corresponding numerical solution of the one-dimensional scalar conservation law
$$
\begin{cases}
\partial_t u(t,x) + f(u)(t,x) = 0\quad (t,x)\in (0,{ t_{\textrm{end}}})\times (-1,1)\\
u(0,x) = u_0(x)\quad x\in (-1,1),
\end{cases}
$$
which should be equipped with a boundary condition depending on the initial data.

\subsubsection{Model problems and discretization parameters}\label{disc_par_sect}

In the first series of experiments,
we have investigated the one-dimensional Burgers problem with $f(u) = \frac{u^2}{2}$ so that the conditions in (A1)-(A3) are satisfied.

In the second series of numerical experiments, we have used the flux function $f(u) = u - u^2$ in \eqref{cons_law}. This corresponds to the Lighthill--Whitham--Richards (LWR) traffic flow model \cite{lighthill55}, \cite{richards56}.
Since this function $f$ is strictly concave, we can again apply \eqref{G_scheme}. Accordingly, following the lines in Proposition 1, one can show that in this case, \eqref{G_scheme} can be given as
\begin{equation} \label{LWR_flux}
\begin{aligned}
&\hat f_{j+\frac{1}{2}}
=
\min
\left\{ f
\left(-\relu\!\left(\frac{1}{2} - u^{L}_{j+\frac{1}{2}}\right) + \frac{1}{2}
\right),\right.\\
&\qquad\qquad\left.
f
\left(\relu\!\left( u^{R}_{j+\frac{1}{2}}-\frac{1}{2}\right) + \frac{1}{2}
\right)
\right\}.
\end{aligned}
\end{equation}
Here, we regard $\frac{1}{2}$ as a known parameter, which is the critical point of the flux function $f$.

{ For the results in Section \ref{learning_G_sect}, the spatial discretization of $[-1,1]$ consists of  uniformly distributed grid points of number $\textrm{nx} =129.$
In Section \ref{learning_G_sect}, we have performed 64 time steps with a size of
$\delta =\frac{0.25}{64}$, according to the final time $t_{\textrm{end}}=0.25$.

In Section \ref{learning_R_sect}, in most cases, the parameters nx, $\delta$ and $t_{\textrm{end}}$ were systematically varied to ensure a thorough evaluation.
Accordingly, these parameters are reported in the descriptions of the experiments.}

\subsubsection{Training dataset}

To construct the training dataset, we
considered model problems with the following initial data.
\begin{itemize}
\item Riemann-type:
\[
u_0(x) = \begin{cases}
u_l\quad x < x_0 \\
u_r\quad x > x_0
\end{cases}
\]
\item Piecewise linear:
\[
u_0(x) = \begin{cases}
u_l\quad x < x_1 \\
u_l + \frac{u_r - u_l}{x_2 - x_1}(x - x_1)\quad x_1 < x < x_2 \\
u_r\quad x > x_2
\end{cases}
\]
\item Trigonometric:
\[
u_0(x) = r_3 \sin(\pi x + r_2).
\]
\end{itemize}
Here the parameters were sampled uniformly in the following intervals
\begin{itemize}
    \item
    $u_l, u_r \in [-1, 1],\; x_0 \in [-0.25, 0.25]$
    \item
    $x_1 \in [-0.25, 0.25], \quad x_2 \in [x_1, x_1 + 0.5]$
    \item
    $r_2 \in [0, 2\pi], \quad r_3 \in [0, 1]$.
\end{itemize}

While in the first two cases, the analytic solution is known, for the trigonometric initial data, we have used an accurate approximation of this;
see \ref{secA1}.

In this case, the solution exhibits smooth behavior at the beginning of the
process, then a shock appears {delivering an important component of the training set.\\
Following the approach of \cite{chen24}, each training sample was initialized at a randomly selected time and evolved over a time interval that includes the onset of shock formation.}


The boundary conditions were always set to match the exact solution at the boundaries.

The training dataset comprised 1,200 problems, evenly distributed among the three categories of initial values described above. Of these, 1,000 instances were allocated for training and 200 for validation.

\subsubsection{Training procedure}\label{training_sect}

{ In all cases, the input of the network consists of the initial and boundary conditions, and the output is the solution at $t_{\textrm{end}}$.} The training was carried out in each case, using the Adam optimizer.

{For learning Godunov's scheme in Section \ref{learning_G_sect}, the network learned over 100 epochs with the batch size 25.}

{ For learning the reconstruction schemes in Section \ref{learning_R_sect}, training was terminated at the first epoch n when the relative change in the error over the preceding five epochs dropped below
$10^{-3}$:
$$
\frac{|\textrm{error}_n - \textrm{error}_{n-5}|}{\textrm{error}_{n-5}}
< 0.001.
$$
Here various epoch sizes were applied: a special series of experiments
are displayed in Table \ref{table_gr} summarizing the details in this
section.}

The corresponding Python {codes} can be run on a simple laptop, we used
one with an Intel Core i7-7500U CPU.
To speed up this procedure, we have also run the procedure on the supercomputer \emph{Komondor} using a GPU of type NVIDIA A100-SXM4-40GB.

\subsubsection{Learning Godunov's scheme}\label{learning_G_sect}
An interesting consequence of Proposition \ref{basic_prop} is that without any prior knowledge, we can learn Godunov's scheme. In concrete terms, if the entries of $A\in\er^{2\times 2}$ in \eqref{G_NN1} are considered unknown parameters, training the corresponding neural network can reproduce the choice in Proposition \ref{basic_prop}.
This network can possibly be deep by incorporating a number of time steps as shown in Figure \ref{res_fig_full}. To demonstrate the performance of the
learning, we compare the matrix $A\in\er^{2\times 2}$ in \eqref{G_NN1} with the one obtained in the learning process. For this, we use the Frobenius norm of their difference, see Figure \ref{learn_G_method}. Also, the training and validation loss are shown here, which exhibit the same decay.
\vspace{-20pt}
\begin{figure}[H]
\centering
\includegraphics[width=.49\linewidth]{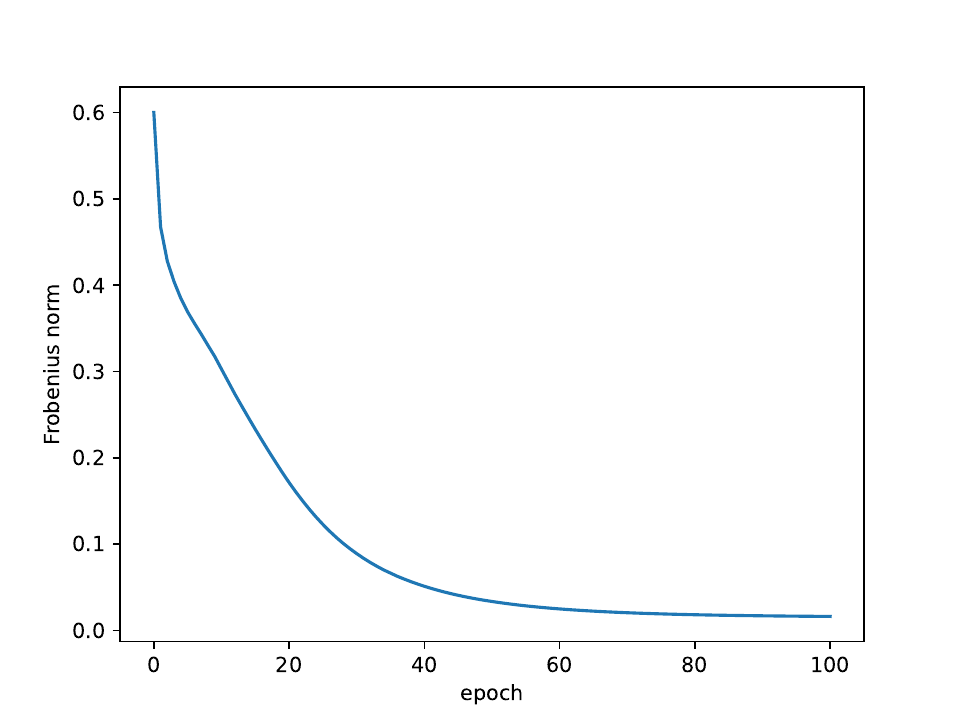}\hfill
\includegraphics[width=.49\linewidth]{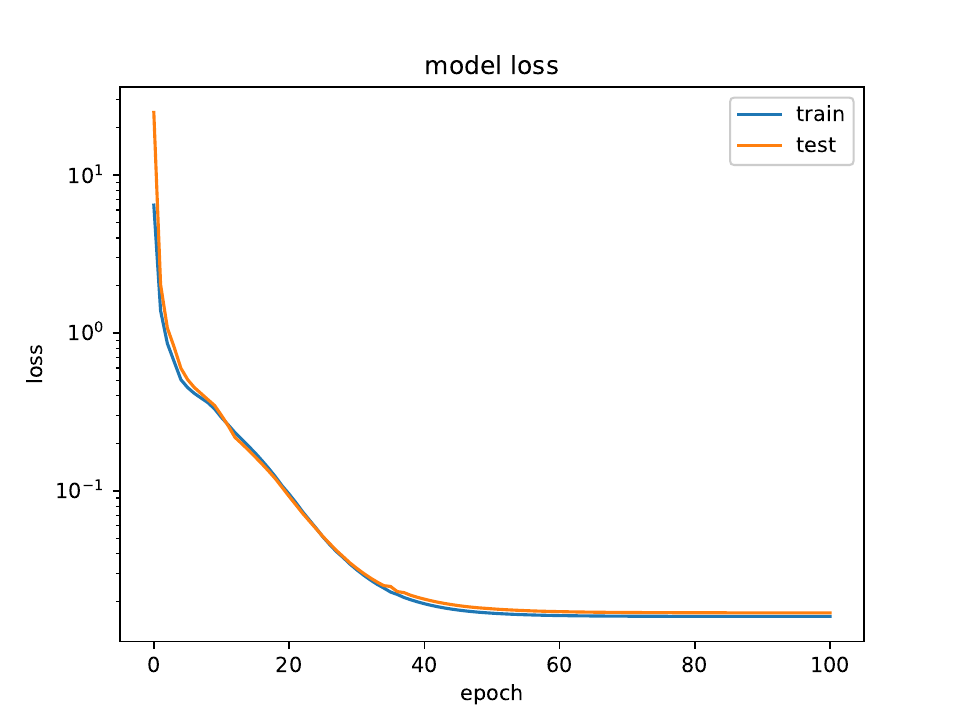}
\vspace{-5pt}
\caption{Learning the Godunov scheme. Left: the Frobenius norm of the difference of the matrix $A$ in  Proposition \ref{basic_prop} and the one in the training process. Right: training and validation losses over 100 epochs.} \label{learn_G_method}
\end{figure}

The process was initiated with the matrix $A_0=\begin{pmatrix}
    0.7&0.3\\
    -0.3&-0.7
\end{pmatrix}$. The full simulation results in the case of the trigonometric initial data with this initial choice and with the final choice are shown in Figure
\ref{res_with_A}. Here, we used the initial condition
$$
u_0(x) = 0.62\cdot \sin(\pi x + 2.27)
$$
at the initial time  $t_0=0.4375$. A shock arises at $t\approx 0.5$ and the final time  is $0.6875$.
\begin{figure}[H]
\centering
\includegraphics[width=.49\linewidth]{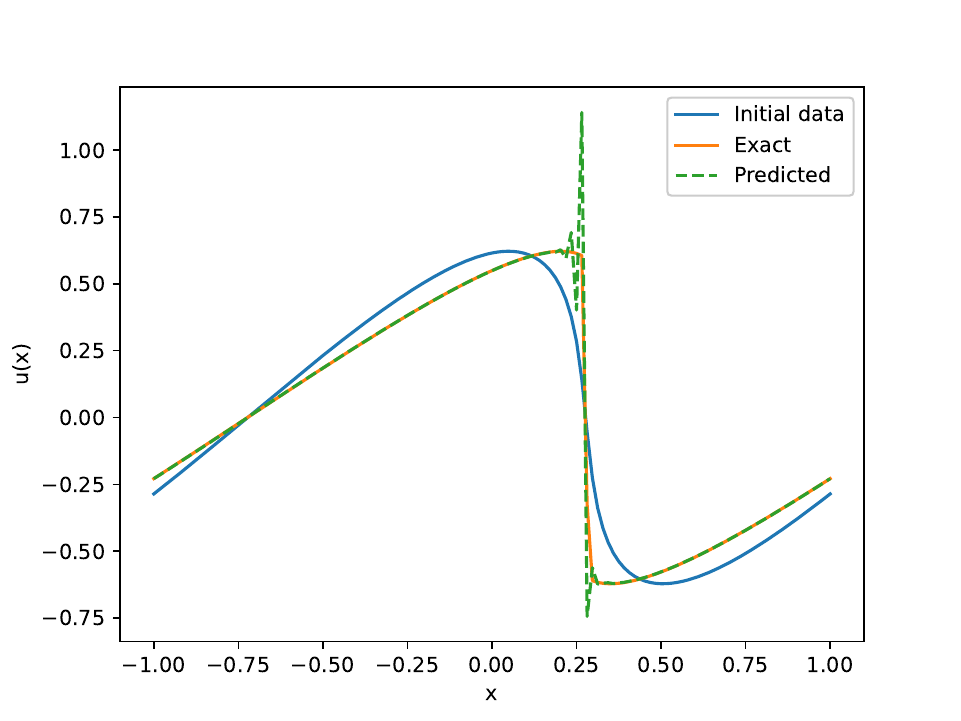}\hfill
\includegraphics[width=.49\linewidth]{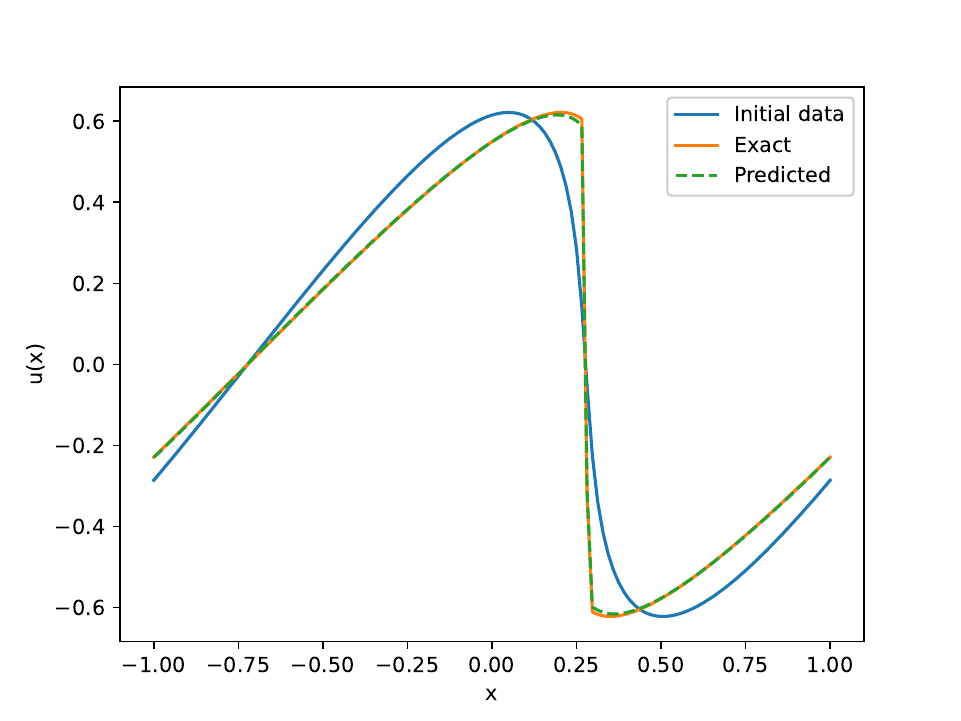}
\caption{Result of the full simulation corresponding to Figure \ref{res_fig} using the initial matrix $A_0$ and the final one after 100 epochs. The trigonometric test problem was used, with the simulation run for 0.1875 time units after the shock developed.} \label{res_with_A}
\end{figure}

According to \eqref{LWR_flux}, in the case of the LWR traffic model, we consider $\frac{1}{2}$ in the bias terms as a known parameter and we intend to learn only the linear transformation
$
\begin{pmatrix}
    u^L_{j+\egyk}\\u^R_{j+\egyk}
\end{pmatrix}
\xrightarrow{A\cdot}
\begin{pmatrix}
    -u^L_{j+\egyk}\\u^R_{j+\egyk}
\end{pmatrix}.
$
Here the multiplier $A$ has again 4 parameters, which is given in the second row of the table. Also, the network could  learn the bias terms and the other linear mapping
$$
\begin{pmatrix}
    \relu\!\left(\frac{1}{2} - u^{L}_{j+\frac{1}{2}}\right) \\
\relu\!\left( u^{R}_{j+\frac{1}{2}}-\frac{1}{2}\right)
\end{pmatrix}
\to
\begin{pmatrix}
-\relu\!\left(\frac{1}{2} - u^{L}_{j+\frac{1}{2}}\right) + \frac{1}{2}\\
\relu\!\left( u^{R}_{j+\frac{1}{2}}-\frac{1}{2}\right)+ \frac{1}{2}
\end{pmatrix}.
$$
Here, again, the entries of a $2\times 2$ matrix and a bias term would give 6 parameters such that, altogether, we would have  $6+6 = 12$ of them.


\begin{table}[H]
\caption{Summary of the main characteristics of neural networks and performance of the learning process for
getting optimal conservative
schemes in case of the
Burgers and the LWR equation.
}
\label{table_gr}
\centering
\begin{tabular}
{@{\extracolsep\fill}lccccccc}
\toprule%
& & & \multicolumn{3}{@{}c@{}} {Number/size of} & \\
\cmidrule{4-7}%
Problem & Method & Computer & \makecell{Input$\&$\\output} & Layers & \makecell{Para-\\meters} & Epochs & Time \\
\midrule
Burgers & Godunov & Laptop & 128 & 64 & 4 & 100 & 355 s\\
Burgers & Godunov & HPC & 128 & 64 & 4 & 100 & 235 s\\
LWR & Godunov & Laptop & 128 & 64 & 4 & 100 & 477 s\\
LWR & Godunov & HPC & 128 & 64 & 4 & 100 & 315 s\\
Burgers & reconstruction & Laptop & 128 & 64 & 5 & 20 & 240 s\\
Burgers & reconstruction & HPC & 128 & 64 & 5 & 20 & 142 s\\
LWR & reconstruction & Laptop & 128 & 64 & 5 & 20 & 246 s\\
LWR & reconstruction & HPC & 128 & 64 & 5 & 20 & 161 s\\
\midrule
\end{tabular}
\end{table}

\FloatBarrier
\subsubsection{{Learning reconstruction schemes}}\label{learning_R_sect}

For learning the slopes in \eqref{final_rec_step},
 we have fixed the breakpoints $(x_0,x_1,\dots,x_6) = [0,\frac{1}{4}, \frac{1}{3},\frac{1}{2},\frac{2}{3},\frac{3}{4},1]$, see \eqref{phi_def} and Figure \ref{pw_lin}. In this way, as unknown parameters, we applied
 $\Phi\left(\frac{1}{4}\right), \Phi\left(\frac{1}{3}\right),
 \Phi\left(\frac{1}{2}\right),
 \Phi\left(\frac{2}{3}\right),
 \Phi\left(\frac{3}{4}\right)$,
 which give the slopes
  $m_0, m_1, m_2, m_3$ and $m_4$.

The performance of the learning process regarding the reconstruction scheme is displayed in Figure \ref{fig_limiters_all}. Initiated from the classical superbee and the minimax limiters, the symmetry is preserved and tends to similar ones. In both cases, we used the parameters nx = 129 and $\delta =\frac {0.25}{64}$.

\begin{figure}[H]
\includegraphics[width=.5\linewidth]{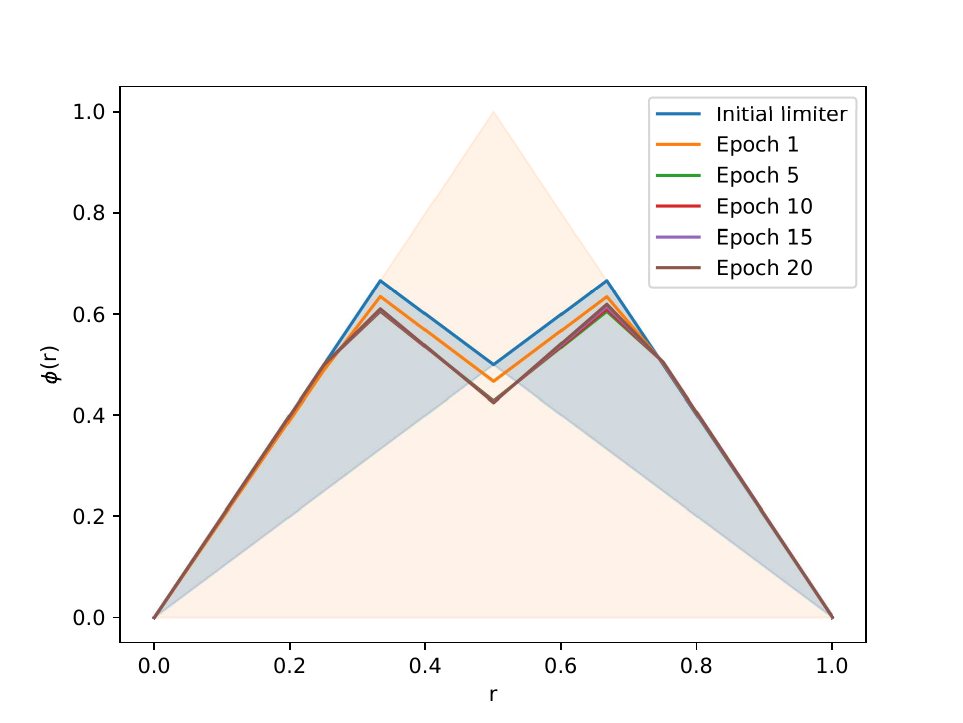}\hfill
\includegraphics[width=.5\linewidth]{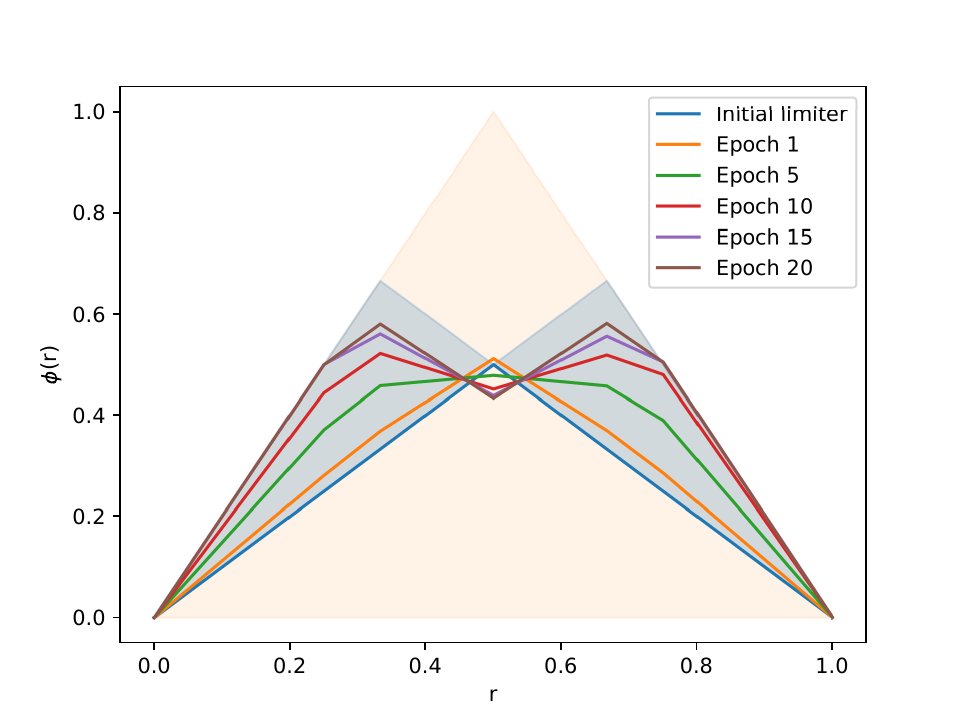}
\caption{Variation of slope limiters in the training procedure using two different initial parameters over 20 epochs. Left: starting from the superbee limiter, right: starting from the minmax limiter. In both cases, we used the parameters nx = 129 and $\delta =\frac {0.25}{64}$.}
\label{fig_limiters_all}
\end{figure}

Both for the Burgers equation and the LWR problem, we used the same reconstruction block shown in Figure \ref{fig:slope_block}. At the same time, the Godunov step was different for the two problems. As the parameters in this step have already been learned, in both cases, we have
only five learnable parameters. The setup of the network is shown again in Table \ref{table_gr}. Here, for comparison purposes, in each case, we again used the fixed parameter values nx = 129 and $\delta =\frac {0.25}{64}$ in all cases.

\paragraph{Optimality and robustness of the learned limiters}

We first investigated, on the inviscid Burgers equation, how the optimal reconstruction limiter depends on both the discretization and on the class of model problems used during training. The goal of this experiment was not only to obtain a single best-performing limiter, but also to understand whether the learned reconstruction is robust with respect to changes in mesh resolution, CFL number, initial data and final time. The following parameters were varied:
\begin{itemize}
    \item the number of spatial grid points, $\text{nx}\in\{33,65,129,257\}$;
    \item the CFL number, $\mathrm{CFL}\in\{1/2,1/4,1/8\}$, where
    $$
    \mathrm{CFL}=\frac{\delta}{h};
    $$
    \item the initial condition class, $u_0\in\{\text{Riemann},\text{linear},\text{trigonometric}\}$;
    \item the length of the time interval, $\Delta t\in\{1/4,1/2,1\}$.
\end{itemize}
The first two parameters determine the numerical discretization, while the latter two characterize the underlying test problem. We first studied the influence of the discretization parameters on the limiter obtained at the end of training. In this experiment, all three initial-condition classes and all three final times were included in the training set, giving $3\times 3=9$ different problem classes. These classes were placed in separate batches; hence each epoch consisted of 9 batches, each with batch size 100.

In the first discretization test, the spatial mesh was fixed and the CFL number was varied. The results are shown in Fig.~\ref{fig:fix_nx}. In this case, the spatial interval was divided into 256 cells. Smaller CFL numbers require more time steps to reach the same final time. As expected, sharper reconstructions are beneficial when the numerical error accumulated over many time steps has to be compensated.

In the second discretization test, the CFL number was fixed and the spatial resolution was varied. The corresponding results for $\mathrm{CFL}=0.25$ are displayed in Fig.~\ref{fig:fix_cfl}. The figure indicates that, as the mesh is refined, the optimal limiter becomes less aggressive. This behavior is consistent with the fact that, on finer grids, the truncation error is already smaller and the network does not need to compensate for coarse-grid diffusion by selecting an overly sharp reconstruction.

\begin{figure}[h]
    \centering
    \begin{subfigure}[b]{0.49\textwidth}
    \centering
        \includegraphics[width=\textwidth]{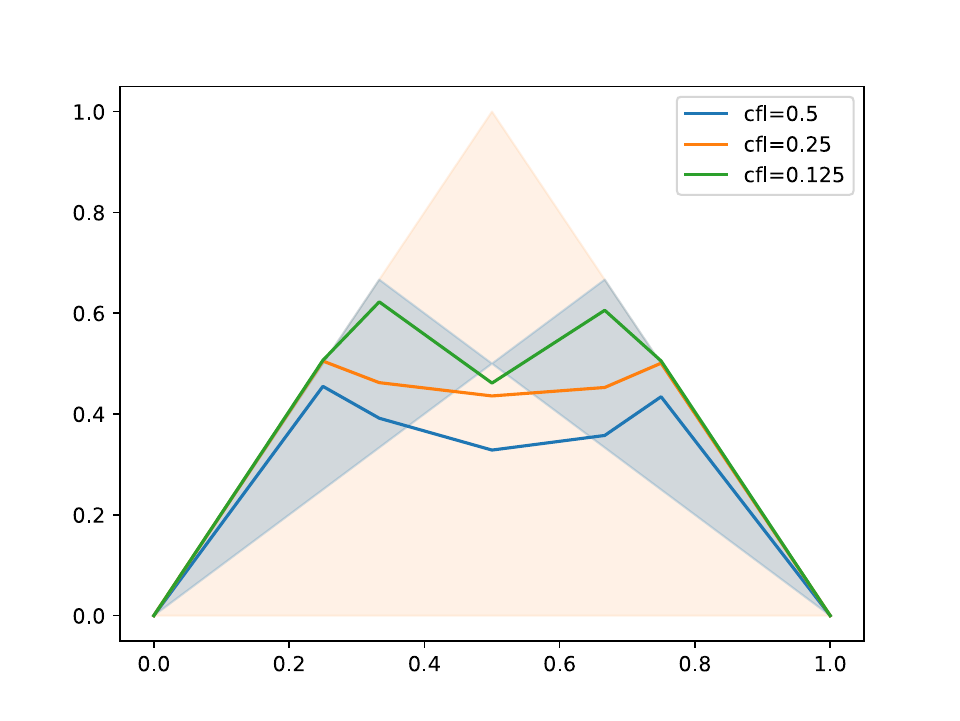}
        \caption{Fixed spatial discretization}\label{fig:fix_nx}
    \end{subfigure}
    \hfill
    \begin{subfigure}[b]{0.49\textwidth}
    \centering
        \includegraphics[width=\textwidth]{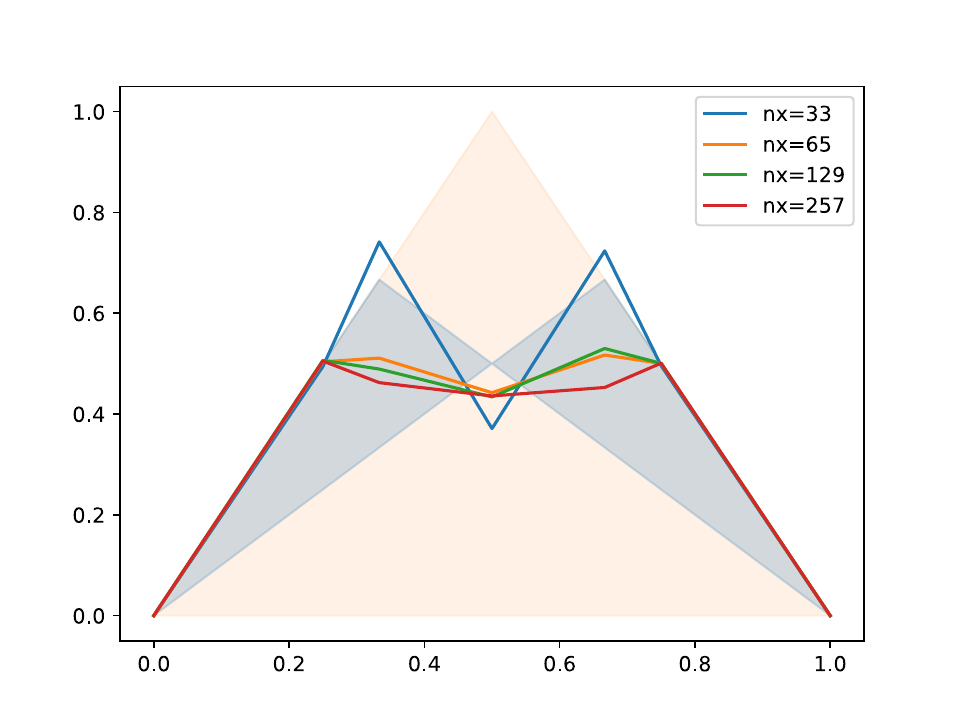}
        \caption{Fixed CFL number}\label{fig:fix_cfl}
    \end{subfigure}
    \caption{Learned slope limiters obtained with fixed spatial discretization nx = 257 (left) and with fixed CFL number 0.25 (right).}
\end{figure}

Next, we examined which limiter is preferred for fixed classes of model problems. In this part of the experiment, all CFL numbers were included in the training data, but only the finer spatial discretizations with $nx=129$ and $nx=257$ were used. This choice was motivated by the previous experiment: on very coarse meshes, the learned limiter tends to become extremely aggressive in order to reduce the dominant discretization error, which may obscure the dependence on the problem class itself. Thus, in this setting, the training set contained $3\times 2=6$ different discretizations. These were placed into separate batches, so that each epoch consisted of 6 batches of size 100.

First, the initial-condition class was fixed and the final time was varied. Figure~\ref{fig:fix_type} shows the results for trigonometric initial data. The outcome is counterintuitive at first sight: longer time intervals lead to more aggressive learned limiters. One might expect the opposite, since aggressive limiters can amplify non-physical oscillations over many time steps. However, this behaviour can be explained by the structure of the solution. Starting from smooth trigonometric data, the solution develops a shock after a predictable time and subsequently becomes closer to a piecewise smooth, almost piecewise linear profile separated by discontinuities. Such profiles can be approximated efficiently by sharper reconstructions, provided that the limiter remains sufficiently stable near shocks.

Finally, the final time was fixed and the class of initial data was varied. The results in Fig.~\ref{fig:fix_dt} show that, for trigonometric initial data, a more aggressive limiter may be advantageous compared to the Riemann and linear initial data.

The results confirm that the optimal reconstruction is not universal: it depends not only on the mesh and time-step size, but also on the qualitative structure of the solution over the time interval considered.

\begin{figure}[h]
    \centering
    \begin{subfigure}[b]{0.49\textwidth}
    \centering
        \includegraphics[width=\textwidth]{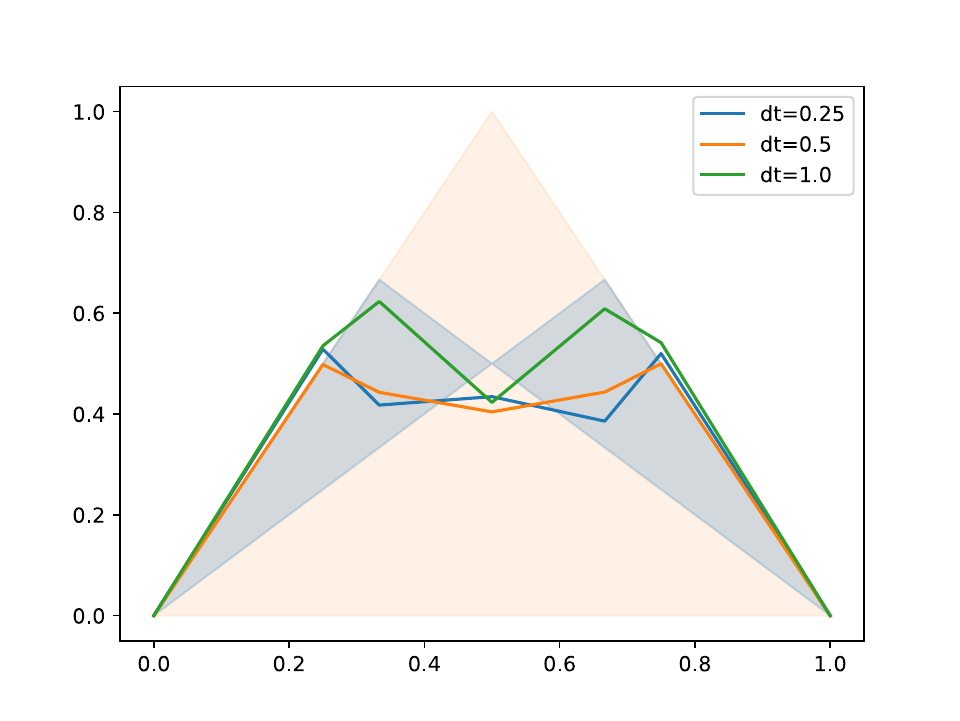}
        \caption{Fixed initial-condition class}\label{fig:fix_type}
    \end{subfigure}
    \hfill
    \begin{subfigure}[b]{0.49\textwidth}
    \centering
        \includegraphics[width=\textwidth]{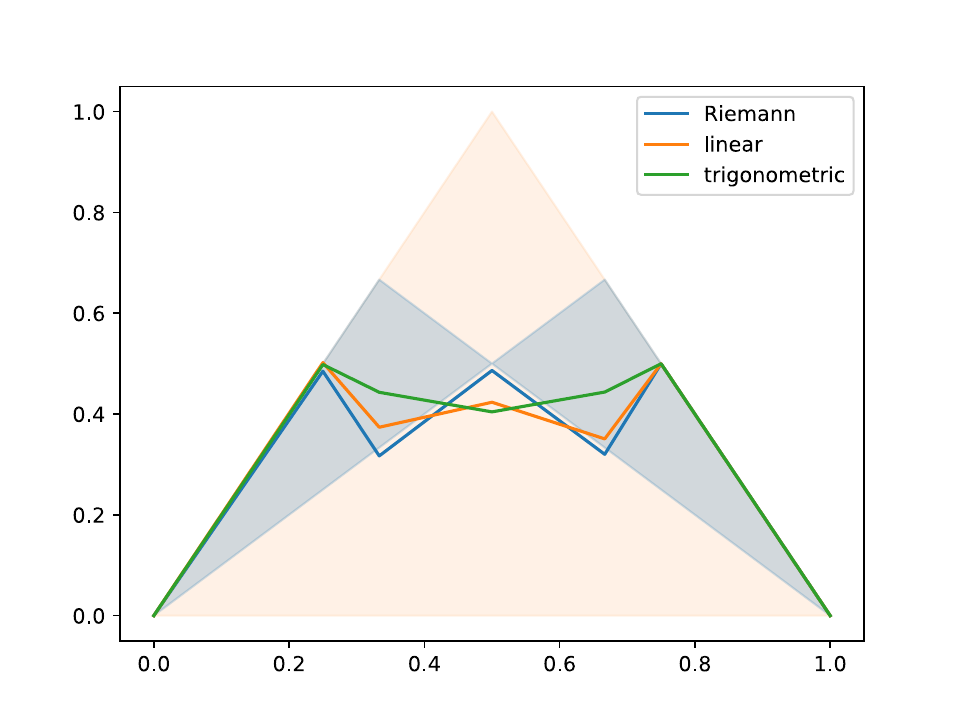}
        \caption{Fixed final time}\label{fig:fix_dt}
    \end{subfigure}
    \caption{Learned slope limiters obtained for trigonometric initial conditions (left) and fixed final time $dt =0.5$ (right).}
\end{figure}

\paragraph{{Comparison with standard limiters}}
The previous experiments show that there is no single limiter that is optimal for every discretization and for every class of initial data. Consequently, testing a network trained only on one type of problem, for example, Riemann problems, on a qualitatively different class such as trigonometric initial data would not give a fair assessment of the method. In the following experiments, we therefore evaluate the learned limiter on problem classes that were included during training.

\begin{table}[ht!]
\centering
\caption{Discrete $L_2$ errors for different limiters and CFL numbers. The reported values are scaled by $10^{-3}$.}
\label{tab:cfl_limiters}
\begin{tabular}{cc S[table-format=2.2] S[table-format=2.2] S[table-format=2.2] S[table-format=2.2]}
\toprule
\multirow{2}{*}{CFL} & \multirow{2}{*}{Limiter} & \multicolumn{4}{c}{$nx$} \\
\cmidrule(l){3-6}
 & & {33} & {65} & {129} & {257} \\
\midrule
\multirow{4}{*}{0.5}
& Minmod   & 9.20 & 5.90 & 4.01 & 2.71 \\
& MC       & 9.35 & 7.33 & 5.91 & 4.82 \\
& Superbee & 12.35 & 10.35 & 8.17 & 6.40 \\
& Learned  & {\bfseries 7.68} & {\bfseries 4.81} & {\bfseries 3.19} & {\bfseries 2.18} \\
\addlinespace

\multirow{4}{*}{0.25}
& Minmod   & 10.23 & 6.08 & 3.73 & 2.26 \\
& MC       & 7.40 & 4.79 & 3.25 & 2.30 \\
& Superbee & 8.46 & 6.34 & 4.96 & 3.96 \\
& Learned  & {\bfseries 7.29} & {\bfseries 4.26} & {\bfseries 2.64} & {\bfseries 1.66} \\
\addlinespace

\multirow{4}{*}{0.125}
& Minmod   & 11.16 & 6.66 & 4.02 & 2.40 \\
& MC       & 7.31 & 4.33 & 2.64 & 1.63 \\
& Superbee & 7.33 & 4.74 & 3.14 & 2.25 \\
& Learned  & {\bfseries 7.20} & {\bfseries 4.18} & {\bfseries 2.46} & {\bfseries 1.45} \\
\bottomrule
\end{tabular}
\par\smallskip \noindent\parbox{\textwidth}{ \raggedright\footnotesize Bold entries indicate the smallest error among the four limiters for each fixed combination of CFL number and spatial resolution. \par}
\end{table}

Table~\ref{tab:cfl_limiters} compares the learned limiters with the standard Minmod, MC and Superbee limiters for different CFL numbers and mesh sizes. In this experiment, the limiter was trained separately for each discretization setting, so the table does not represent a single universal learned limiter, but rather the best learned limiter associated with each configuration. The learned limiters give the smallest errors for every CFL numbers and meshes and the improvement is most visible on finer meshes. This indicates that the learned approach can effectively adapt to the underlying discretization and reduce the numerical error compared with classical limiters. Moreover, reducing the CFL number does not lead to a monotone improvement for all standard limiters. In contrast, the learned limiters show consistently good performance across the tested CFL values.

\begin{table}[ht!]
\centering
\caption{Experimental comparison of different slope limiters: discrete $L_2$ errors for various initial conditions and final times. Entries are given in units of $10^{-3}$.}
\label{tab:dt_initial_conditions}
\begin{tabular}{cc S[table-format=1.2] S[table-format=1.2] S[table-format=1.2]}
\toprule
\multirow{2}{*}{Initial condition} & \multirow{2}{*}{Limiter} & \multicolumn{3}{c}{$\Delta t$} \\
\cmidrule(l){3-5}
& & {0.25} & {0.5} & {1.0} \\
\midrule
\multirow{4}{*}{Riemann}
& Minmod   & 5.51 & 5.07 & 4.65 \\
& MC       & 4.72 & 5.31 & 6.31 \\
& Superbee & 5.90 & 7.53 & 9.65 \\
& Learned  & {\bfseries 4.65} & {\bfseries 4.53} & {\bfseries 4.31} \\
\addlinespace

\multirow{4}{*}{Linear}
& Minmod   & 2.93 & 3.55 & 3.48 \\
& MC       & 2.74 & 3.72 & 4.25 \\
& Superbee & 3.88 & 5.23 & 6.09 \\
& Learned  & {\bfseries 2.00} & {\bfseries 2.56} & {\bfseries 2.59} \\
\addlinespace

\multirow{4}{*}{Trigonometric}
& Minmod   & 1.05 & 1.35 & 1.12 \\
& MC       & 1.19 & 1.50 & {\bfseries 1.08} \\
& Superbee & 1.76 & 2.02 & 1.26 \\
& Learned  & {\bfseries 0.96} & {\bfseries 1.31} & 1.16 \\
\bottomrule
\end{tabular}
\par\smallskip \noindent\parbox{\textwidth}{ \raggedright\footnotesize Bold entries indicate the smallest error among the four limiters for each fixed combination of initial-condition class and final time. \par}
\end{table}

Table~\ref{tab:dt_initial_conditions} reports the error separately for the three types of initial data and for the three final
times. For the Riemann and linear initial data, the learned limiter achieves the best performance for all final times. The improvement is particularly pronounced for the linear initial data, where the learned reconstruction substantially reduces the error compared with the standard limiters. For the trigonometric initial data, the learned limiter performs best only over the shorter time intervals, while the MC limiter yields the smallest error for $\Delta t = 1.0$. However, the difference in performance is small.

Overall, these tests support two main conclusions. First, the proposed learning approach can outperform classical limiters when the training and testing distributions are consistent. Second, the learned limiter should not be interpreted as a universal replacement for all standard limiters; rather, it is a data-adapted reconstruction strategy whose performance depends on the target data.

Finally, Fig.~\ref{fig_final} illustrates the performance of the complete numerical method with different slope limiters for a Riemann problem. Compared with the classical limiters, the neural network-based limiter provides the most accurate approximation of the exact solution and is significantly less diffusive than the pure Godunov scheme. Its profile lies between those of the minmod and superbee limiters, suggesting that the learned reconstruction achieves a good balance between numerical diffusion and the sharp resolution of the discontinuity. The training was performed using nx $= 65$, cfl$=0.5$, $t_{\textrm{end}}\in\{0.5, 0.25, 0.125\}$, and randomly selected Riemann, linear, and trigonometric initial conditions. Accordingly, in the model problem shown in Fig.~\ref{fig_final}, we used nx $= 65$, cfl$=0.5$, $t_{\textrm{end}} = 0.5.$

\begin{figure}[ht]
    \centering
\includegraphics[width=0.85\linewidth]{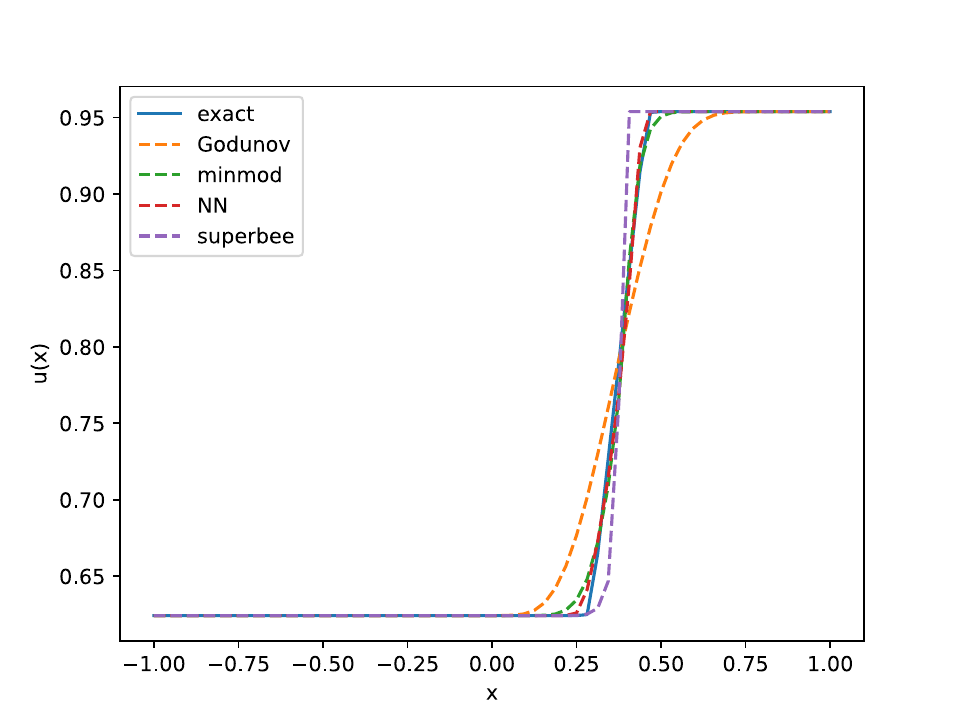}
    \caption{Comparison of the neural network-based limiter with the exact solution, the Godunov scheme, and classical flux limiters for a Riemann problem.}
    \label{fig_final}
\end{figure}

\paragraph{{LWR model}}
We also compared the proposed neural reconstruction approach with the FLOW model from \cite{morand24}. In that approach, the numerical flux $\hat f$ in Eq.~(2) is represented by a dense neural network. The FLOW model was trained for the Lighthill--Whitham--Richards model with the Greenshields flux
$$
f(u)=u-\frac{u^2}{4},
$$
and therefore we used the same flux in the comparison reported in Table~\ref{table:NN}.

\begin{table}[ht!]
\centering
\caption{Discrete $L_{2}$ errors obtained by different neural network-based methods. Entries are given in units of $10^{-3}$.}
\label{table:NN}
\begin{tabular}{cc S[table-format=2.2] S[table-format=2.2] S[table-format=2.2]}
\toprule
\multirow{2}{*}{CFL} & \multirow{2}{*}{Method} & \multicolumn{3}{c}{$nx$} \\
\cmidrule(l){3-5}
 & & {101} & {201} & {401} \\
\midrule
\multirow{3}{*}{0.5}
& CNN\_FLOW   & 17.40 & 11.81 & 8.22 \\
& NN\_R       & {\bfseries 8.06} & 5.87 & 4.60 \\
& NN          & 8.13 & {\bfseries 5.86} & {\bfseries 4.52} \\
\addlinespace
\multirow{3}{*}{0.25}
& CNN\_FLOW   & 20.09 & 13.71 & 9.52 \\
& NN\_R       & 7.72 & 4.81 & 3.15 \\
& NN          & {\bfseries 7.47} & {\bfseries 4.63} & {\bfseries 3.04} \\
\bottomrule
\end{tabular}
\par\smallskip \noindent\parbox{\textwidth}{ \raggedright\footnotesize Bold entries indicate the smallest error among the three neural network-based methods for each fixed combination of CFL number and spatial resolution. \par}
\end{table}

As in the Burgers experiments, we trained on three classes of initial data. Since the FLOW model was trained only on Riemann problems, we considered two versions of our method. The model denoted by NN\_R was trained only on Riemann initial data, whereas NN was trained on all three initial-condition classes. The test measures how the accuracy changes as the mesh is refined.

The results in Table~\ref{table:NN} show that both variants of the proposed method substantially outperform CNN\_FLOW for all tested CFL numbers and spatial resolutions. The difference is already large on the coarsest grid and remains visible as the mesh is refined. Comparing NN\_R and NN also shows that the choice of training data matters. When $\mathrm{CFL}=0.5$, the Riemann-only model is marginally better at the coarsest resolution, but the model trained on the richer data set becomes slightly more accurate on finer grids. For $\mathrm{CFL}=0.25$, the NN model trained on all initial-condition classes is consistently the best.

These experiments provide an additional validation of the proposed approach. Instead of learning the full numerical flux directly, the method learns a reconstruction mechanism embedded into a finite-volume scheme. This keeps the numerical method closer to the structure of classical high-resolution schemes, while still allowing the reconstruction to adapt to the data. The LWR results indicate that this structure-preserving learning strategy can be competitive with, and in the present tests more accurate than, a fully neural flux approximation.


\section{Discussion}\label{discussion}
First, we discuss the complexity of the
neural network approach. Turning to the Godunov type network in Figure \ref{res_fig_full}, we note that in case of $N$ time steps, we use a number of $N\cdot 6$ layers, which
can lead to a really deep neural network. We get even more deep networks by learning
reconstruction schemes.
At the same time, in these cases, the number of parameters, {independently from the number of layers}, is only 4 or 5,
see Table \ref{table_gr} and Table \ref{compare_no_par}.

\begin{table}[H]
\caption{A summary of some neural network methods, architectures, and computational complexity for solving the one-dimensional Burgers equation.}
\label{compare_no_par}
\begin{tabular}
{@{\extracolsep\fill}lccccc}
\toprule%
Ref.& \makecell{Basic\\ approach} &  \multicolumn{3}{c}{Number/size of}  &\\
\cmidrule{3-5}
 & & layers & \makecell{neurons\\per layer} & parameters & Remark\\
 \midrule
\cite{savovic23} & PINN & 3 & 20 & $\sim$1,300 & also: comparison\\
[2pt]
\cite{raissi19} & PINN & 4 & different & $1200$ & \makecell{foundational work} \\
[2pt]
\cite{deryck24} & PINN  & 5 & 50 & $10401$ & \makecell{based on \\weak form}\\
[2pt]
\cite{li21} & FNO & 3 & 40 & $3441$ &  \makecell{foundational work} \\
[2pt]
\cite{berrone23} & FNO & 5 & 20 & $1761$ & \makecell{special treatment\\ for discontinuities} \\
[2pt]
\cite{lu21} & PINN & 4 & 100 & $57888$ & \makecell{deep O-net\\ architecture}\\
[2pt]
\cite{chen22} & NN-flux & 5 & 64 & $17153$ &  \makecell{similar to\\ present}\\
[2pt]
\cite{liu24} & NN-flux & 5 & 64 & $17153$ &  \makecell{entropy-stable \\version of \cite{chen22}}\\
[2pt]
\cite{kim25} & FNO-flux & 5 & 64 & $541$ & \makecell{64 channels,\\ 4 Fourier layers,\\ 16 modes} \\
[2pt]
\makecell{present\\ approach} & \makecell{exact NN\\
Godunov flux} & $64\cdot 6$ & different & $4$ & \makecell{
 including\\ time steps} \\
 [2pt]
 \makecell{present\\ approach} & \makecell{exact NN\\
reconstruction} & $64\cdot 13$ & different & $5$ & \makecell{
 including\\ time steps} \\
\midrule
\end{tabular}
\end{table}





To compare this with other neural networks for this purpose in the literature, we have collected the data of some related approaches. This is shown in Table \ref{compare_no_par}. Note that the number of the parameters are usually not directly comparable, since the corresponding networks are designed for different tasks. One can
observe that the PINN-based methods may use a huge number of parameters.
Whenever FNOs offer function space-based approach, the number of parameters is still high.

In case of works related to our approach, a conservative scheme is assumed and only numerical fluxes are sought using neural networks. Accordingly, they use a moderate number of parameters. In the present work, we could still significantly reduce this number compared to the recent approaches.

 Finally, we have also quantified
 the approximation properties of our neural network-based numerical method in relation { with the conventional approaches using Minmod, Superbee and MC slope limiters.}
 The results presented in Table \ref{tab:dt_initial_conditions} indicate that the proposed method achieves slightly better performance than conventional approaches.

 { At the same time, as the learning procedure yields a standard finite-volume method with optimized slope limiters, the resulting method has essentially the same minimal computational complexity as conventional finite-volume methods. This represents a significant advantage over PINN-based approaches, such as the recent one  \cite{deryck24}, whose resulting models require multiplications with dense matrices  during inference.}

\section{Conclusions and further work}
We have shown here a clear link between conventional numerical methods
and neural network-based approximations for {one-dimensional} scalar conservation laws. The network parameters result in a simple yet efficient
conventional numerical method for solving one-dimensional scalar conservation laws.
This clear and simple setup paves the way for optimizing also the corresponding
time-stepping scheme and extending the approach to multidimensional or vector-valued conservation laws.
{The main steps for vector-valued schemes are similar. For example, instead of Godunov's scheme, the HLL scheme could be reformulated as a parametric neural network. While the application of slope limiters would remain essentially unchanged, constructing a sufficiently rich set of analytically solvable training problems is a non-trivial task for real-world applications such as the compressible Euler equations and the shallow water equations.}

\appendix
\renewcommand{\thesection}{Appendix}

\section{An accurate approximation for learning data}\label{secA1}

    Using the initial condition $u_0(x)=g(x)$, in \eqref{cons_law} with a bounded function $g$, our goal is to approximate the solution at a point $(t,x)\in\mathbb{R}^2$.\\
We will use the method of characteristics. First, assume that they do not intersect up to time $t$, and there exists a unique characteristic passing through $(t,x)$.
Such a characteristic has the slope $u(t,x)$ and the solution remains constant along the curves $$x'(t)=u(t,x(t)).$$
Therefore, it passes the $x$-axis at $x-u(t,x)$, where the solution is
$g(x-u(t,x))$ such that
\begin{equation}\label{impl_char2}
    u(t,x) = g(x - t u(t,x)).
\end{equation}
This holds for all $0 < t < -\frac{1}{\inf_x g'(x)}$, i.e., up to the time characteristics intersect. This is an implicit, nonlinear equation for $u(t,x)$, but due to the boundedness of $g$, the solution is also bounded.

For the numerical solution of \eqref{impl_char2}, we rearrange it as
$$
F(u) = g(x - f'(u)t) - u = 0
$$
and apply Newton's iteration formula
as follows:
$$
u_{n+1} = u_n + \frac{g(x - t u_n) - u_n}{t g'(x - t u_n) + 1}.
$$
Thus, to compute the solution at time $t$, we apply Newton's method for each spatial grid point.

For the specific case:
$$g(x) = r_3 \sin(\pi x + r_2),$$
where $r_3\in[0,1]$ and $r_2\in [0,2\pi]$, we have:
$$g'(x) = \pi r_3 \cos(\pi x + r_2),$$
and therefore,
$$\inf_x g'(x) = -\pi r_3.$$
Hence, characteristics do not intersect when $t \leq \frac{1}{\pi r_3}$, and the above analysis is valid.\\
The range of $g$ is the interval $
R_g = [-r_3, r_3]$,
and Newton's method becomes:
$$
u_{n+1} = u_n + \frac{r_3 \sin(\pi(x - t u_n) + r_2) - u_n}{t \pi r_3 \cos(\pi(x - t u_n) + r_2) + 1}.
$$

In this specific initial condition, the exact solution can be obtained even for fixed $t > \frac{1}{\pi r_3}$. First note that the initial condition is a full sine wave over the interval $[-1,1]$, and the solution remains symmetric with respect to the point $(xs, 0)$, where
$$
xs = \frac{\pi - r_2}{\pi}.
$$
Hence, a shock develops only at $xs$, and remains stationary (moving with $0$ velocity) due to the Rankine--Hugoniot condition. Therefore, the earlier method remains applicable for any $t > 0$.\\
A critical aspect of Newton's method is choosing an appropriate initial guess. Therefore the order in which we compute the solution values is really important. First, the boundary conditions are computed for all time steps $k\delta,\ (k=1,\ldots,K)$ using Newton's method. For small $\delta$, the boundary values change only a little between time steps, making the previous value a good initial guess. At $t=0$, the solution is known at $x=-1$ and $x=1$, enabling recursive computation of boundary values.\\
Subsequently, the solution at $t=K \times \delta$ is determined. Since the solution is known at the boundaries and is continuous except at $xs$, we can proceed inward from both boundaries using Newton's method, initializing with the value from the neighboring point.

\section*{Funding and Acknowledgements}
This research was supported by the Ministry of Innovation and Technology NRDI Office within the framework of the Artificial Intelligence National Laboratory Program RRF-2.3.1-21-2022-00004, and within the framework of the Thematic Excellence Program ELTE TKP 2021-NKTA-62.

\medskip
\noindent We acknowledge the Digital Government Development and Project Management Ltd. for awarding us access to the Komondor HPC facility based in Hungary.

\bibliographystyle{plain}
\bibliography{bib_JSC}

@article {hillebrand23,
    AUTHOR = {Hillebrand, Dorian and Klein, Simon-Christian and \"Offner,
              Philipp},
     TITLE = {Applications of limiters, neural networks and polynomial
              annihilation in higher-order {FD}/{FV} schemes},
   JOURNAL = {J. Sci. Comput.},
  FJOURNAL = {Journal of Scientific Computing},
    VOLUME = {97},
      YEAR = {2023},
    NUMBER = {1},
     PAGES = {Paper No. 13, 31},
      ISSN = {0885-7474,1573-7691},
   MRCLASS = {65M08 (65M06)},
  MRNUMBER = {4637483},
}

@article{morand24,
  title     = {Deep learning of first-order nonlinear hyperbolic conservation law solvers},
  author    = {Morand, Victor and M{\"u}ller, Nils and Weightman, Ryan and Piccoli, Benedetto and Keimer, Alexander and Bayen, Alexandre M.},
  journal   = {Journal of Computational Physics},
  volume    = {511},
  pages     = {113114},
  year      = {2024},
  publisher = {Elsevier},
  }

@article{nguyen22,
    author = {Nguyen-Fotiadis, Nga and McKerns, Michael and Sornborger, Andrew},
    title = {Machine learning changes the rules for flux limiters},
    journal = {Physics of Fluids},
    volume = {34},
    number = {8},
    pages = {085136},
    year = {2022},
    month = {08},
}

@article{nogueira24,
author = {Xesús Nogueira and Javier Fernández-Fidalgo and Lucía Ramos and Iván Couceiro and Luis Ramírez},
title = {Machine learning-based {WENO5} scheme},
journal = {Computers $\&$ Mathematics with Applications},
volume = {168},
pages = {84-99},
year = {2024},
issn = {0898-1221},
}

@article{kossaczka24,
    author = {Kossaczká, Tatiana and Jagtap, Ameya D. and Ehrhardt, Matthias},
    title = {Deep smoothness weighted essentially non-oscillatory method for two-dimensional hyperbolic conservation laws: A deep learning approach for learning smoothness indicators},
    journal = {Physics of Fluids},
    volume = {36},
    number = {3},
    pages = {036603},
    year = {2024},
    month = {03},
}

@article{ruggeri22,
    author = {Ruggeri, Matteo and Roy, Indradip and Mueterthies, Michael J. and Gruenwald, Tom and Scalo, Carlo},
    title = {Neural-network-based {R}iemann solver for real fluids and high explosives; application to computational fluid dynamics},
    journal = {Physics of Fluids},
    volume = {34},
    number = {11},
    pages = {116121},
    year = {2022},
    month = {11},
}

@article{xu25,
author = {Liang Xu and Ziyan Liu and Yiwei Feng and Tiegang Liu},
title = {Unsupervised neural-network solvers for multi-material {R}iemann problems},
journal = {Computer Physics Communications},
volume = {308},
pages = {109470},
year = {2025},
issn = {0010-4655},
}

@article{bardos79,
  author    = {Claude Bardos and Alain Y. le Roux and Jean-Claude Nédélec},
  title     = {First-order quasilinear equations with boundary conditions},
  journal   = {Communications in Partial Differential Equations},
  volume    = {4},
  number    = {9},
  pages     = {1017--1034},
  year      = {1979},
  publisher = {Taylor \& Francis},
}

@article{godunov59,
  author    = {S. K. Godunov},
  title     = {A Difference Method for Numerical Calculation of Discontinuous Solutions of the Equations of Hydrodynamics},
  journal   = {Matematicheskii Sbornik},
  year      = {1959},
  volume    = {47},
  number    = {3},
  pages     = {271--306},
  note      = {English translation in: J. Comput. Phys., 1969},
}

@article{harten83,
  author    = {Ami Harten},
  title     = {High Resolution Schemes for Hyperbolic Conservation Laws},
  journal   = {Journal of Computational Physics},
  year      = {1983},
  volume    = {49},
  number    = {3},
  pages     = {357--393},
}

@book{leveque92,
  author    = {Randall J. LeVeque},
  title     = {Numerical Methods for Conservation Laws},
  publisher = {Birkh\"auser},
  year      = {1992},
  edition   = {2nd},
  address   = {Basel},
  isbn      = {9783764327231}
}

@article{liu94,
author = {Xu-Dong Liu and Stanley Osher and Tony Chan},
title = {Weighted Essentially Non-oscillatory Schemes},
journal = {Journal of Computational Physics},
volume = {115},
number = {1},
pages = {200-212},
year = {1994},
issn = {0021-9991},
}

@article{jiang96,
author = {Guang-Shan Jiang and Chi-Wang Shu},
title = {Efficient Implementation of Weighted {ENO} Schemes},
journal = {Journal of Computational Physics},
volume = {126},
number = {1},
pages = {202-228},
year = {1996},
issn = {0021-9991},
}

@article{chen24,
author = {Chen, Zhen and Gelb, Anne and Lee, Yoonsang},
title = {Learning the Dynamics for Unknown Hyperbolic Conservation Laws Using Deep Neural Networks},
journal = {SIAM Journal on Scientific Computing},
volume = {46},
number = {2},
pages = {A825-A850},
year = {2024},
}

@article{lighthill55,
  title={On kinematic waves. II. A theory of traffic flow on long crowded roads},
  author={Lighthill, M. J. and Whitham, G. B.},
  journal={Proceedings of the Royal Society of London. Series A, Mathematical and Physical Sciences},
  volume={229},
  number={1178},
  pages={317--345},
  year={1955},
  publisher={The Royal Society}
}

@article{richards56,
  title={Shock waves on the highway},
  author={Richards, Paul I.},
  journal={Operations Research},
  volume={4},
  number={1},
  pages={42--51},
  year={1956},
  publisher={INFORMS}
}

@article{kim25,
  author  = {Kim, Taeyoung and Kang, Myungjoo},
  title   = {Approximating Numerical Fluxes Using {F}ourier Neural Operators for Hyperbolic Conservation Laws},
  journal = {Communications in Computational Physics},
  volume  = {37},
  number  = {2},
  pages   = {420--456},
  year    = {2025},
}

@article{chen22,
  title={Designing Neural Networks for Hyperbolic Conservation Laws},
  author={Chen, Zhen and Gelb, Anne and Lee, Yoonsang},
  journal={arXiv preprint arXiv:2211.14375},
  year={2022},
}

@article{raissi19,
  title   = {Physics-informed neural networks: A deep learning framework for solving forward and inverse problems involving nonlinear partial differential equations},
  author  = {Raissi, Maziar and Perdikaris, Paris and Karniadakis, George Em},
  journal = {Journal of Computational Physics},
  volume  = {378},
  pages   = {686--707},
  year    = {2019},
}

@inproceedings{li21,
  title     = {Fourier Neural Operator for Parametric Partial Differential Equations},
  author    = {Li, Zongyi and Kovachki, Nikola B. and Azizzadenesheli, Kamyar and Liu, Burigede and Bhattacharya, Kaushik and Stuart, Andrew M. and Anandkumar, Anima},
  booktitle = {International Conference on Learning Representations (ICLR)},
  year      = {2021},
}

@article{deryck24,
  title={{wPINNs}: Weak Physics Informed Neural Networks for Approximating Entropy Solutions of Hyperbolic Conservation Laws},
  author={De Ryck, Tim and Mishra, Siddhartha and Molinaro, Roberto},
  journal={SIAM Journal on Numerical Analysis},
  volume={62},
  number={2},
  pages={569--600},
  year={2024},
  publisher={SIAM}
}

@article{berrone23,
  title={Physics-informed neural networks with special treatment of discontinuities},
  author={Berrone, Stefano and D'Amato, Andrea},
  journal={Journal of Computational and Applied Mathematics},
  volume={429},
  pages={115162},
  year={2023},
  publisher={Elsevier}
}

@article{lu21,
  title={{Learning nonlinear operators via DeepONet based on the universal approximation theorem of operators}},
  author={Lu, Lu and Jin, Pengzhan and Pang, Guofei and Zhang, Ziqiang and Karniadakis, George E},
  journal={Nature Machine Intelligence},
  volume={3},
  number={5},
  pages={453--458},
  year={2021},
  publisher={Nature Publishing Group}
}

@Article{savovic23,
AUTHOR = {Savović, Svetislav and Ivanović, Miloš and Min, Rui},
TITLE = {A Comparative Study of the Explicit Finite Difference Method and Physics-Informed Neural Networks for Solving the {B}urgers’ Equation},
JOURNAL = {Axioms},
VOLUME = {12},
YEAR = {2023},
NUMBER = {10},
ARTICLE-NUMBER = {982},
}

@article{liu24,
  title={Entropy-stable conservative flux form neural networks},
  author={Liu, Lizuo and Li, Tongtong and Gelb, Anne and Lee, Yoonsang},
  year={2024},
note={available at: \url{https://arxiv.org/abs/2411.01746}},
}

@article{kovachki23,
  title        = {Neural Operator: Learning Maps Between Function Spaces With Applications to PDEs},
  author       = {Nikola Kovachki and Zongyi Li and Burigede Liu and Kamyar Azizzadenesheli and Kaushik Bhattacharya and Andrew Stuart and Anima Anandkumar},
  journal      = {Journal of Machine Learning Research},
  volume       = {24},
  number       = {89},
  pages        = {1--97},
  year         = {2023},
}

@techreport{berger05,
  author       = {Marsha Berger and Michael J. Aftosmis and Scott M. Murman},
  title        = {Analysis of Slope Limiters on Irregular Grids},
  institution  = {NASA Ames Research Center},
  year         = {2005},
  number       = {NAS-05-007},
  address      = {Moffett Field, CA},
}
\end{document}